\author[a]{Carla~Mereu}
\author[b]{Robert~Stelzer}
\affil[a]{\small{Ulm University,  Institute of Mathematical Finance, Helmholtzstrasse 18, 89081 Ulm, Germany. ccarlammereu@gmail.com}}
\affil[b]{\small{Ulm University, Institute of Mathematical Finance, Helmholtzstrasse 18, 89081 Ulm, Germany. robert.stelzer@uni-ulm.de}}
\title{A BSDE arising in an exponential utility maximization problem in a pure  jump market model}
\renewcommand{\tilde}{\widetilde}
\newcommand{\xproc}[1]{\ensuremath{(#1_{t})_{t \in [0,T]}}}
\newcommand{\R}{\ensuremath{\mathbb{R}}}
\newcommand{\E}{\ensuremath{\mathbb{E}}}
\newcommand{\spzpf}{$(\Omega, \mathcal{F}, \ensuremath{(\mathcal{F}_t)_{t \in [0,T]}}, {\ensuremath{\mathbb{P}}})$ }
\renewcommand{\P}{\ensuremath{\mathbb{P}}}
\newcommand{\F}{\ensuremath{\mathcal{F}}}
\newcommand{\ud}{\,\mathrm{d}}
\DeclareMathOperator*{\argmin}{argmin}
\newcommand{\systeq}[4]{\begin{equation}
\left\{
\begin{array}{rl}
#1=& #2,\qquad \forall t\in [ 0,T ]\\
#3=& #4
\end{array}
\right.
\end{equation} }
\newcommand{\matlabcode}[3]%
{
\definecolor{number}{gray}{0.6}
\definecolor{keywords}{rgb}{1.0,0.3,0.3}
\definecolor{comments}{rgb}{0.1,0.65,0.1}
\definecolor{strings}{rgb}{0.3,0.0,1.0}
\lstset{language=Matlab,
        morekeywords={switch,case},
        sensitive=true,
        showspaces=false, 
        basicstyle=\ttfamily\small\mdseries,
      	keywordstyle=\bfseries\color{keywords},
 	      commentstyle=\color{comments},
 	      stringstyle=\color{strings},
 	      numbers=left,
 	      numberstyle=\scriptsize\color{number},
 	      stepnumber=1,
 	      breaklines=true,
 	      frame=none,
        showstringspaces=false,
 	      tabsize=4,
 	      xleftmargin=\bigskipamount,
 	      xrightmargin=\bigskipamount,
 	      aboveskip=\bigskipamount,
      	belowskip=0pt
}
\lstinputlisting[caption=#2, label=#3, frame=tb]{#1}
}
\renewcommand{\[}{\left[}
\renewcommand{\]}{\right]}
\renewcommand{\(}{\left(}
\renewcommand{\)}{\right)}
\renewcommand{\leq}{\leqslant}
\renewcommand{\geq}{\geqslant}
\DeclareMathOperator{\tr}{tr}
\numberwithin{equation}{section}
\newtheorem{Teo}{Theorem}[section]
\newtheorem{Lem}[Teo]{Lemma}
\newtheorem{Prop}[Teo]{Proposition}
\theoremstyle{definition}
\newtheorem{Def}[Teo]{Definition}
\theoremstyle{remark}
\newtheorem{oss}[Teo]{Remark}
\date{}
\begin{document}

\maketitle
\begin{center}
\emph{Dedicated to Bernt \O ksendal on the occasion of his 70th birthday}
\end{center}

\medskip
\begin{abstract}
We consider the problem of utility maximization with exponential preferences in a market where the traded stock/risky asset price is modelled as a L\'evy-driven pure jump process (i.e. the driving L\'evy process has no Brownian component). In this setting, we study the terminal utility optimization problem in the presence of a European contingent claim.
We consider in detail the BSDE (backward stochastic differential equation)  characterising the value function when using an exponential utility function. First we analyse the well-definedness of the generator. This leads to some conditions on the market model related to conditions for the market to admit no free lunches. Then we give bounds on the candidate optimal strategy. 

Thereafter, we discuss the example of a cross-hedging problem and, under severe assumptions on the structure of the claim, we give explicit solutions. Finally, we establish an explicit solution for a related BSDE with a suitable terminal condition but a simpler generator.
\end{abstract}

\smallskip
\noindent \textbf{Keywords:} BSDE, cross hedging, exponential utility, L\'evy process, stationary spread.

\vspace{0.3cm}
\smallskip
\noindent \textbf{MSC 2010:} Primary: 91G80 Secondary: 60G51, 60H10, 60J75, 93E20.
\vspace{0.3cm}

\section{Introduction}\label{sec:Introduction}
In the context of utility maximization, exponential utility has been widely used because of its nice analytic tractability. In particular, it shows fundamental separation properties when dealing with contingent claims.

For It\^o-diffusion and continuous martingale models, backward stochastic differential equation (BSDE for short) methods have been applied in order to relax the assumption of convexity on the constraint set in the seminal paper by \cite{hu2005utility} as well as in many following papers by various authors. \cite{hu2005utility} rely on the so-called ``martingale optimality principle'' to derive a BSDE characterizing the solution of the problem. 
In the presence of jumps, the first paper using this methodology in the context of utility maximization, at least to the best of our knowledge, is \cite{becherer2006bounded} (we refer the reader to e.g. \cite{OeksendalSulem2007} for a general introduction into optimal control with jump processes).
 \cite{becherer2006bounded} considers again an It\^o-diffusion market model, but relaxes the assumptions on the filtration. This is assumed to be the natural filtration generated by a multidimensional Brownian motion and an independent integer-valued random measure. \cite{morlais2009utility,morlais2010new} extends the results to the case of a L\'evy-It\^o diffusion model, i.e. she allows also jumps in the stock price process. In all the above mentioned papers, a fundamental assumption is that the Gaussian covariance matrix is strictly positive-definite, which ensures the existence of an equivalent martingale measure.
In the absence of a Gaussian component in the dynamics of the price, additional conditions on the L\'evy measure and the drift term need to be imposed for the model to admit an equivalent (local) martingale measure  (see e.g. \cite{bardhan1996martingale, protter2008no, kardaras2009no}).

The purpose of this paper is to analyze the ``complementary'' case to the one studied in \cite{becherer2006bounded}, namely when the stock price is a L\'evy-driven pure jump process and the filtration is generated by its associated jump measure and an independent Brownian motion. In this setting, we first construct a BSDE by means of the martingale optimality principle in the standard way (i.e. as e.g. in \cite{becherer2006bounded} or \cite{morlais2009utility,morlais2010new}) and give conditions for the corresponding generator to be well-defined.  Let us also mention here that these conditions turn out to be identical to conditions known to imply no free lunch with vanishing risk in several situations where such NFLVR conditions are known. After deriving the BSDE via the martingale optimality principle, we follow a different route than the previously cited papers, which show existence and uniqueness of solutions in appropriate spaces. Instead we first simply assume that we have a solution to the BSDE and study whether this allows us to obtain a solution to the utility optimization problem including an optimal strategy. Later on we, similarly to  \cite{richter2012explicit},  consider a concrete problem were we can directly find a solution to the BSDE. The reason is that we want to obtain explicit solutions and that we want to consider unbounded constraint sets for the strategy for which it seems to be very hard to prove optimality of strategies in the presence of jumps in general (note  that \cite{morlais2009utility,morlais2010new} shows that in her jump diffusion market the optimization problem has a solution given in terms of the BSDE, but nothing is said about the existence of an optimal strategy for unbounded constraint sets).

A motivation for the considered problem is the following application. We are interested in investigating a cross-hedging problem in the case where the stock price is described by a pure jump process but the investor wants to hedge a derivative on another (illiquid) asset. Imagine that the price of this asset is -- as in some cross-hedging problems -- strongly correlated with the price of the traded stock, but that their logspread is not constant and exhibits a mean reverting behaviour. This is in the present paper modeled by a market where the stock price is a L\'evy-driven pure jump process and the logspread follows an Ornstein-Uhlenbeck process.

The remainder of the paper is structured as follows. In Section \ref{sec:Model} we introduce the model. Section \ref{sec:WellPosedness} deals with the well-posedness of the optimization problem and, in particular, Theorem \ref{nflvr} gives conditions on the market parameters such that the problem is well-defined. Moreover, we give bounds on the ``candidate optimal strategy'' and conditions when it is indeed optimal. In Section \ref{sec:Cross hedging} we illustrate an example of a cross-hedging problem where explicit solutions can be obtained, under the assumption that the claim is logarithmic in the price of the illiquid asset. Finally, in Section \ref{sec:exp terminal} we discuss the difficulties of extending the approach of Section \ref{sec:Cross hedging} to more general claims.

\section{The market model}\label{sec:Model}
We assume given a filtered probability space \spzpf with $T>0$ a finite time horizon and a filtration \xproc{\F} satisfying the usual conditions. Assume that the above filtration is generated by the following two processes, independent of each other:
\begin{itemize}
\item a standard (one-dimensional) Brownian motion \xproc{W};
\item a real-valued Poisson point process $p$ with associated counting measure $N_p(\ud t,\ud x),$ and compensator $\widehat{N}_p(\ud t,\ud x)=\nu(\ud x)\ud t,$ where the L\'evy measure $\nu$ is positive and satisfies
\begin{equation}
\nu(\{0\}) = 0,\ \text{and}\ \int_{\R^*}(1\wedge|x|^2)\nu(\ud x)<\infty.
\end{equation}
Let $\tilde{N}_p$ denote its compensated counting measure.
\end{itemize}
Here, we denote $\R\setminus\{0\}$ by $\R^*$.
\xproc{\F} is hence the right-continuous filtration generated by the two processes, and completed by the $\P$-null sets. We denote by $\mathcal{P}=\mathcal{P}(\mathcal{F}_t)$ the associated predictable $\sigma$-algebra on $[0,T]\times\Omega$ and
 we define the following spaces:\\
$\mathrm L^2(W):=\left\{\xproc{Z} \text{ \small predictable s.t. }\E\left[\int_0^T |Z_s|^2\ud s \right]<\infty\right\},$\\
$\mathrm L^2(\tilde{N}_p) :=\left\{\xproc{U}\ \mathcal{P}\otimes\mathcal{B}(\R^*)\text{\small-measurable s.t. }\E\left[\int_{[0,T]\times \R^*} |U_s(x)|^2\nu(\ud x)\ud s \right]<\infty\right\}.$\\
For a measure $\nu$ on $\R\setminus\{0\}$ we define
$\mathrm L^0(\nu)$
as the space of all $u:\R\to\R$ measurable equipped with the (local) topology of the convergence in measure (see e.g. \cite{bauer2001measure}, \textsection 20, Part II) and we further set
$\mathrm L^2(\nu) :=\{u\in \mathrm L^0(\nu) \text{ such that } \int_{\R^*}|u(x)|^2\nu(\ud x)<\infty\}$,\\
$\mathrm L^\infty(\nu) :=\{u\in \mathrm L^0(\nu) \text{ such that }u\text{ takes bounded
  values }\nu\text{-almost surely}\}$.\\
Consider a market model consisting of a riskless asset, taken as numeraire, and a risky asset whose discounted price process $S = \xproc{S}$ evolves according to the following SDE: \systeq{\ud S_t}{S_{t-}\left(\varphi_t \ud t + \int_{\R^*}\psi_t(x)\tilde N_p(\ud t, \ud x)\right)}{S_0}{s\in(0,\infty),}
for $\varphi,\ \psi$ uniformly bounded predictable processes with $\psi\in\mathrm L^2(\tilde{N}_p)$, and $\psi>-1$ $\P-$a.s. at every time. The latter assumption ensures that the price stays strictly positive. Obviously for constant deterministic $\varphi$ and $\psi$ this is a standard exponential L\'evy model (see e.g. \cite{tankov2004financial}) which is popular in finance. However, our model is much more general. It allows not only time-inhomogeneous L\'evy models but also, for instance, models of a stochastic volatility type, as the coefficients may be stochastic. Noting that $\psi$ may depend on $W$ one could e.g. have dynamics like $ \ud S_t=S_{t-}\sqrt{\sigma_{t-}^2}dL_t$ with $L$ a pure-jump L\'evy process and $\sigma_{t-}^2$ being a square root diffusion (truncated to ensure the assumed boundedness) similar to the Heston model. So our modelling set-up allows a lot of flexibility to cover many of the stylized facts (cf. \cite{Cont2001}, \cite{Guillaumeetal1997}) of financial data sets. Finally it should be noted that we consider the price processes under the real world measure, not a risk-neutral one.

Assume now that we want to hedge a position at the terminal time $T$, i.e. we know that we will have to pay an $\F_T$-measurable discounted amount $B$. 
We want to maximize the expected utility of the discounted terminal wealth. The discounted wealth process $X^{x,\pi}$ is composed of the initial capital $x\in\R,$ and gains from trading with a self-financing strategy $\pi$ in the market. The strategy $\pi$ corresponds to the discounted amount of money invested in the stock, the number of shares is $\pi_t/S_t$.

As we only consider discounted quantities, we will from now on often omit the adjective ``discounted''. Note also that if a riskless bank account having a zero interest rate is the numeraire then our approach considers undiscounted quantities.

The wealth process for an initial capital $x$ at time $t$ solves the equation
\begin{equation*}
X^{\pi,t,x}_s = x + \int_t^s\pi_r\frac{\ud S_r}{S_{r-}}, \quad\forall s \in[t,T]
\end{equation*}
and the dynamics of the wealth process can be rewritten
as
\begin{equation}\label{wealthdyn}
\left\{
\begin{array}{rl}
\ud X^{\pi,t,x}=& \pi_s\varphi_s\ud s+\int_{\R^*}\pi_s\psi_s(x)\tilde{N}_p(\ud s,\ud x),\qquad \forall s\in [t,T ]\\
X^{\pi,t,x}_t=& x.
\end{array}
\right.
\end{equation} 
To ease notation, we will sometimes omit one or more superscripts in the wealth process, the parameters being implicitely fixed. If not specified, the initial time is assumed to be $t=0$.\\  
We want to solve the following problem
\begin{equation*}
V(x) = \sup_{\pi\in\mathcal{A}}\mathbb{E}[U(X_T^{\pi,0,x} -B)], \quad x\in\R
\end{equation*}
where $U(x) = -\exp(-\alpha x)$ is the exponential utility function, $\alpha\in(0,\infty)$ the risk aversion parameter, and $\mathcal{A}$ is a
fixed set of admissible trading strategies defined in Definition \ref{admissible set} below.
Throughout this paper we consider only the exponential utility function (similarly to e.g. the works of \cite{becherer2006bounded} and \cite{morlais2009utility,morlais2010new}). One reason is that the exponential function has particularly nice (separation) features for our further analysis. However, the approach to solve the stochastic  optimization problem by BSDEs is in principle also applicable for different utility functions, in particular, power utilities (see e.g. \cite{hu2005utility}, \cite{richter2012explicit} for examples). However, a different utility function already results in a different generator of the BSDE and a very particular feature of the exponential utility function is that the initial wealth factors out of the value function $V$ and  that therefore the optimal strategy does not depend on the initial wealth. Hence, all the upcoming investigations have to be done anew for a different utility function and it is not clear to which extent one can e.g. find special cases allowing for explicit solutions as we will do later on. Investigating the use of different utility functions in our set-up is thus a very interesting question for future research but beyond the scope of the present paper.
\begin{Def}\label{admissible set}Let $C$ be a closed set in $\R$ with $0\in C$. The set of \emph{admissible strategies} $\mathcal{A}$ consists of all predictable processes $\pi=(\pi_t)_{0\leq t\leq T},$ $\pi$ taking values in $C$ $\lambda\otimes\P-$a.e., where $\lambda$ denotes the Lebesgue measure on $\R$, such that $\int_0^T|\pi_s\varphi_s|ds\in L^2(\Omega,P)$ and $\pi\psi\in L^2(\widetilde N_p)$ and such that the set \begin{equation}\label{admissible}\left\{\exp\left\{-\alpha X^\pi_\tau\right\}\text{ s.t. }\tau\text{ is a stopping time with values in [0,T]}\right\}\end{equation} is a uniformly integrable family.
\end{Def}
Above $\lambda$ denotes the Lebesgue measure. Regarding the integrability properties, elementary arguments including Jensen's inequality and the boundedness of  $\varphi$ show the following.
\begin{Lem}\label{intlem}
 Assume that $\pi$ is predictable and $E\left(\int_0^T|\pi_s|^2ds\right)<\infty$. 

If $|\psi_s|_{L^2(\nu)}:=\left(\int_{\R^*}\psi^2_s(x)\nu(dx)\right)^{1/2}$ is bounded on $\Omega\times[0,T]$, then   $\int_0^T|\pi_s\varphi_s|ds\in L^2(\Omega,P)$ and $\pi\psi\in L^2(\widetilde N_p)$.
\end{Lem}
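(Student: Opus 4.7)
The plan is to verify the two integrability conditions separately, each by a direct bound that reduces matters to the standing hypothesis $E\!\left(\int_0^T|\pi_s|^2\ud s\right)<\infty$, combined with boundedness of $\varphi$ (which is part of the model's standing assumption) and boundedness of $|\psi|_{L^2(\nu)}$ (which is the additional hypothesis).

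For the first claim, I would fix a deterministic bound $K$ with $|\varphi_s|\le K$ uniformly on $\Omega\times[0,T]$, so that $|\pi_s\varphi_s|\le K|\pi_s|$. Then Jensen's inequality (or equivalently Cauchy--Schwarz against the constant function $1$ on $[0,T]$) gives
\begin{equation*}
\left(\int_0^T|\pi_s\varphi_s|\ud s\right)^2 \le K^2 T \int_0^T|\pi_s|^2\ud s,
\end{equation*}
and taking expectations on both sides yields $E\!\left(\int_0^T|\pi_s\varphi_s|\ud s\right)^2 \le K^2T\,E\!\left(\int_0^T|\pi_s|^2\ud s\right)<\infty$, which in particular implies $\int_0^T|\pi_s\varphi_s|\ud s\in L^2(\Omega,\P)$.

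For the second claim, let $M$ be a bound for $|\psi_s|_{L^2(\nu)}$ on $\Omega\times[0,T]$, so that $\int_{\R^*}\psi_s^2(x)\,\nu(\ud x)\le M^2$. Then by Tonelli's theorem
\begin{equation*}
E\!\left[\int_0^T\int_{\R^*}\pi_s^2\psi_s^2(x)\,\nu(\ud x)\ud s\right]
\le M^2\, E\!\left[\int_0^T\pi_s^2\,\ud s\right]<\infty,
\end{equation*}
which is precisely the condition $\pi\psi\in L^2(\widetilde N_p)$. (Predictability of the integrand $(s,\omega,x)\mapsto \pi_s(\omega)\psi_s(\omega,x)$ is inherited from the predictability of $\pi$ and $\psi$.)

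No step should pose any real obstacle; the only thing to be careful about is invoking the correct standing assumptions (boundedness of $\varphi$ from the model specification, and $|\psi|_{L^2(\nu)}$ being bounded from the hypothesis of the lemma) and justifying the use of Fubini/Tonelli for the nonnegative integrand in the second estimate. Once those are in place, the two bounds above complete the argument.
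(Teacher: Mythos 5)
Your proof is correct and follows exactly the route the paper indicates (the paper gives no written proof, only the remark that the lemma follows from ``elementary arguments including Jensen's inequality and the boundedness of $\varphi$''): Cauchy--Schwarz/Jensen on $[0,T]$ for the drift term and Tonelli plus the uniform bound on $|\psi_s|_{L^2(\nu)}$ for the jump term. Nothing further is needed.
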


We define a dynamic version of the value function associated to the problem as follows
\begin{equation}\label{value function}
V_t(x) = \sup_{\pi\in\mathcal{A}}\E[U(X_T^{x,t,\pi} -B)], \quad x\in\R, t\in[0,T].
\end{equation}
and we will now describe the solution to this problem by a BSDE of the type:
\systeq{\label{bsde}-\ud Y_t}{f(t,Y_{t-},Z_t,U_t)\ud t -Z_t\ud W_t - \int_{\R^*} U_t(x)\tilde N(\ud t,\ud x)}{Y_T}{B.}
A BSDE is determined by a terminal condition (in this case the claim $B$) and a generator. The following derivation  via the ``martingale optimality principle'' follows the standard route (see e.g. \cite{hu2005utility}, \cite{morlais2009utility} ) and so we only sketch it:\\
The martingale optimality principle (see \cite{rogerswilliams2000} , for instance) implies that we should decompose the process $U(X^{x,\pi}_t - Y_t)$ in such a way that it is a supermartingale for every admissible $\pi$ and a martingale for some admissible strategy $\pi^*$.\\
We start by applying It\^o's formula to the function $U$ composed with the process $X^{x,\pi}-Y$ and recalling \eqref{wealthdyn}. We obtain 
\begin{align*}
\ud U(X-Y)_t =U(X_{t-} - Y_{t-})&\left\{\alpha Z_t \ud W_t +\int_{\R^*} (e^{-\alpha(\pi_s\psi_s(x)-U_s(x))}-1)\tilde N(\ud t, \ud x) \right.\\
&- \alpha f(t,Y_{t-},Z_t,U_t)\ud t  -\alpha \pi_t\varphi_t\ud t + \frac12 \alpha^2|Z_t|^2\ud t  \\
&\left.+ \int_{\R^+} \left(e^{-\alpha(\pi_t \psi_t(x) - U_t(x))} - 1 +\alpha(\pi_t\psi_t(x) - U_t(x))\right)\nu(\ud x) \ud t\right\}.
\end{align*}
We want to choose the generator $f$ in such a way that the process above is a supermartingale for every admissible strategy. We hence look at the finite variation part which can be seen to be of the form $-e^{A^{\pi}_t}$ where 
\begin{equation*}
A^\pi_t = \int_0^t \left[\left(\frac12 \alpha^2 Z_s^2 - \alpha \pi_s\varphi_s -\alpha f(s,Y_{s-},Z_s,U_s) \right) + \int_{\R^*} \alpha g_{\alpha}(U_s(x)-\pi_s\psi_s(x)) \nu(\ud x)\right]\ud s
\end{equation*}
and $g_{\alpha}$ is the real convex function defined by $$g_\alpha(y) = \frac{e^{\alpha y}-\alpha y -1}{\alpha}.$$
In particular, the required supermartingale property is satisfied if the argument of the integral defining $A^{\pi}$ is non-negative. The martingale optimality principle therefore implies the following choice of the generator $f$:
\begin{align}
f(t,y,z,u) =f(t,z,u) := &\inf_{\pi\in C} \left\{ \frac\alpha2 |z|^2 + \int_{\R^*}g_\alpha(u(x) - \pi\psi_s(x))\nu(\ud x) - \pi\varphi_s\right\}\nonumber\\
=&\inf_{\pi\in C}\left\{\int_{\R^*} g_\alpha(u(x) - \pi\psi_s(x)) \nu(\ud x) -\pi\varphi_s\right\} + \frac\alpha2 |z|^2. \label{generator}
\end{align}

Sometimes we use the notation $|u|_\alpha:=  \int_{\R^*} g_\alpha(u(x)) \nu(\ud x)$.

\section{Well-posedness of the optimization problem}\label{sec:WellPosedness}
At a first glance, it is immediate to ask whether the generator in Equation \eqref{generator} is well-defined, since we are taking the infimum of a function with a  negative linear term. 
Thus we give first conditions for the minimization problem to be well-posed.\\

\begin{Teo}\label{nflvr}
Let $\mathscr T^+$ be the set of $(t,\omega)\in[0,T]\times\Omega$ such that $\nu(\{\psi_t<0\})=0$ (i.e. the jump sizes are non-negative). Similarly, let $\mathscr T^-$ be the set of $(t,\omega)\in[0,T]\times\Omega$ such that $\nu(\{\psi_t>0\})=0$ (i.e. the jump sizes are non-positive). Assume that
\begin{itemize}
\item $\varphi_t<\int_{\R^*}\psi_t(x)\nu(\ud x) \quad \forall (t,\omega)\in\mathscr T^+,$ 
\item $\varphi_t>\int_{\R^*}\psi_t(x)\nu(\ud x)  \quad \forall (t,\omega)\in\mathscr T^-.$ 
\end{itemize}
Then for every $u\in\mathrm L^2(\nu)\cap\mathrm L^\infty(\nu)$ and $t\in[0,T]$ the function $\lambda:C\to\R$ defined as
\begin{equation*}
\lambda(\pi) = \int_{\R^*}g_\alpha(u(x)-\pi\psi_t(x))\nu(\ud x) -\pi \varphi_t
\end{equation*}
admits a minimum (in $C$).

If $C$ is equal to $\R$, the minimum is unique.
\end{Teo}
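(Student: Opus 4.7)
The plan is to establish existence by showing that $\lambda$ is convex, continuous, and coercive on $C$ (the coercivity requirement being vacuous when $C$ is bounded, in which case continuity on the compact set $C$ already yields the minimum). Uniqueness when $C=\R$ would follow from strict convexity of $\lambda$, itself a consequence of the strict convexity of $g_\alpha$ together with the fact that $\psi_t$ cannot be $\nu$-a.s.\ zero under the hypotheses: were it so, $(t,\omega)$ would lie in $\mathscr T^+\cap\mathscr T^-$ and the two strict inequalities on $\varphi_t$ would be mutually contradictory.

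\textbf{Well-definedness and continuity.} Using $u\in\mathrm L^\infty(\nu)$ and the uniform boundedness of $\psi_t$, for each fixed $\pi$ the argument $u(x)-\pi\psi_t(x)$ lies in a bounded interval on which $g_\alpha(y)\leq C(\pi)\,y^2$ for a finite constant $C(\pi)$, hence $g_\alpha(u-\pi\psi_t)\leq 2C(\pi)(u^2+\pi^2\psi_t^2)$ is $\nu$-integrable because $u,\psi_t\in\mathrm L^2(\nu)$. Thus $\lambda(\pi)$ is finite for every $\pi\in\R$. Convexity passes from $g_\alpha$ to $\lambda$ by linearity of the integral, and continuity of $\lambda$ on $\R$ then follows either from the general fact that a finite convex function on an open interval is continuous, or directly by a dominated-convergence argument.

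\textbf{Coercivity (main obstacle).} This is the delicate part, where the sign conditions enter. I would split into three cases according to the location of $(t,\omega)$. If $(t,\omega)\notin\mathscr T^+\cup\mathscr T^-$, both $\nu(\{\psi_t>0\})$ and $\nu(\{\psi_t<0\})$ are strictly positive; as $\pi\to +\infty$ the integrand $g_\alpha(u-\pi\psi_t)$ tends to $+\infty$ pointwise on $\{\psi_t<0\}$, Fatou's lemma gives $\int g_\alpha(u-\pi\psi_t)\,\nu(\ud x)\to \infty$, and this dominates the linear term $-\pi\varphi_t$; the direction $\pi\to -\infty$ is symmetric using $\{\psi_t>0\}$. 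For $(t,\omega)\in\mathscr T^+$, the direction $\pi\to -\infty$ still goes through by Fatou on $\{\psi_t>0\}$ where the exponential growth of $g_\alpha$ dwarfs any linear term. The truly subtle direction is $\pi\to +\infty$ on $\mathscr T^+$, where $g_\alpha(u-\pi\psi_t)$ grows only linearly in $\pi$. Here I would exploit the sharp lower bound $g_\alpha(y)\geq -y-1/\alpha$ (valid for $y\leq 0$), restricted to $A_R=\{\psi_t\geq R\}$. The set $A_R$ has finite $\nu$-measure by Markov's inequality applied with $\psi_t\in\mathrm L^2(\nu)$, so for $\pi$ large enough that $u-\pi\psi_t\leq 0$ on $A_R$,
\[
\int_{A_R}g_\alpha(u-\pi\psi_t)\,\nu(\ud x)\geq \pi\int_{A_R}\psi_t(x)\,\nu(\ud x)-C_R,
\]
with $C_R=\int_{A_R}u\,\nu(\ud x)+\nu(A_R)/\alpha$ finite. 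Since $g_\alpha\geq 0$ everywhere, the same lower bound holds for the full integral, giving $\lambda(\pi)\geq \pi\bigl(\int_{A_R}\psi_t\,\nu(\ud x)-\varphi_t\bigr)-C_R$. Choosing $R>0$ small enough that $\int_{A_R}\psi_t\,\nu(\ud x)>\varphi_t$, which is possible by monotone convergence since $\int_{\{\psi_t>0\}}\psi_t\,\nu(\ud x)>\varphi_t$ by hypothesis, forces $\lambda(\pi)\to +\infty$. The case $\mathscr T^-$ is handled symmetrically, after replacing $\pi$ by $-\pi$.

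\textbf{Conclusion.} Coercivity combined with continuity on the closed set $C\ni 0$ yields existence of a minimizer via a standard sequential-compactness argument (extract a convergent subsequence from a minimizing sequence, which is bounded by coercivity, and use closedness of $C$ and continuity of $\lambda$). When $C=\R$, strict convexity of $\lambda$ (ensured by $\psi_t$ not being $\nu$-a.s.\ zero) gives uniqueness.
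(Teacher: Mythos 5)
Your proof is correct, but it takes a genuinely different route from the paper. The paper differentiates under the integral sign (this is the content of its Lemma \ref{differentiation under integral}), computes $\lambda'(\pi)=\int_{\R^*}\psi_t(x)(1-e^{-\alpha(\pi\psi_t(x)-u(x))})\nu(\ud x)-\varphi_t$ and $\lambda''\geq 0$, and then shows that $\lambda'$ changes sign: in the one-sided case the key step is the limit interchange $\lim_{\pi\to+\infty}\lambda'(\pi)=\int_{\{\psi_t>0\}}\psi_t\,\ud\nu-\varphi_t>0$, which is exactly where the NFLVR-type hypothesis enters. You instead work derivative-free, bounding $\lambda$ itself from below via the elementary inequality $g_\alpha(y)\geq -y-1/\alpha$ on $y\leq 0$, restricted to $A_R=\{\psi_t\geq R\}$ (finite $\nu$-measure by Chebyshev), and then choosing $R$ by monotone convergence so that $\int_{A_R}\psi_t\,\ud\nu>\varphi_t$. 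What the paper's route buys is the explicit first-order condition for the minimizer, which it reuses later; what your route buys is the avoidance of the holomorphy/differentiation lemma and of the limit interchange for $\lambda'$, plus an explicit linear lower bound $\lambda(\pi)\geq\pi\bigl(\int_{A_R}\psi_t\,\ud\nu-\varphi_t\bigr)-C_R$ that is very much in the spirit of the quantitative bounds the paper only derives later (Proposition \ref{minbound}). Your derivation of strict convexity (from strict convexity of $g_\alpha$ plus $\nu(\psi_t\neq 0)>0$, the latter forced by the incompatibility of the two hypotheses on $\mathscr T^+\cap\mathscr T^-$) matches the paper's conclusion obtained there from $\lambda''>0$.

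One step you should tighten: in the two-sided case you invoke Fatou to get $\int_{\R^*}g_\alpha(u-\pi\psi_t)\,\ud\nu\to+\infty$ and then assert that this ``dominates the linear term.'' Divergence alone does not dominate $-\pi\varphi_t$ when $\varphi_t>0$; you need a rate. The fix is the quantitative version you already use elsewhere: pick $c>0$ with $\nu(\{\psi_t<-c\})>0$, note that for $\pi\geq 2\|u\|_\infty/c$ one has $u-\pi\psi_t\geq \pi c/2$ on that set, whence $\int_{\R^*}g_\alpha(u-\pi\psi_t)\,\ud\nu\geq \nu(\{\psi_t<-c\})\,g_\alpha(\pi c/2)$, which grows exponentially and therefore beats any linear term. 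With that substitution the argument is complete.
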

For the proof 
we  need a lemma guaranteeing that differentiation under the integral sign is allowed.
\begin{Lem}\label{differentiation under integral}
The function $\pi\mapsto \int_{\R^*}g_\alpha(u(x) - \pi\psi(x))\nu(\ud x)$ is holomorphic for every $u,\psi\in \mathrm{L}^2(\nu)\cap \mathrm{L}^\infty(\nu)$ $\mathcal B(\R^*)-$measurable and the derivative is given by
$
 \pi\mapsto \int_{\R^*}\frac{\ud}{\ud \pi}g_\alpha(u(x) - \pi\psi(x))\nu(\ud x)
$
 .
\end{Lem}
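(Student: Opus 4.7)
The plan is to verify the hypotheses of the standard theorem on holomorphy of parameter integrals (sometimes called dominated holomorphy / holomorphic differentiation under the integral sign). For each fixed $x\in\R^*$, the map $\pi\mapsto g_\alpha(u(x)-\pi\psi(x))$ is entire, since $g_\alpha$ extends to $\mathbb{C}$ as the entire function $z\mapsto \alpha^{-1}(e^{\alpha z}-\alpha z-1)$ and composition of entire functions is entire. What remains is to produce, on every compact subset of the $\pi$-plane, a $\nu$-integrable dominating function.

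To this end I would use the elementary power-series bound $|e^z-z-1|\le \tfrac{1}{2}|z|^2 e^{|z|}$, valid for every $z\in\mathbb{C}$, which yields
$$|g_\alpha(u(x)-\pi\psi(x))|\le \frac{\alpha}{2}\,|u(x)-\pi\psi(x)|^2\,\exp\bigl(\alpha\,|u(x)-\pi\psi(x)|\bigr).$$
For $|\pi|\le R$, the $L^\infty(\nu)$ hypothesis gives $|u(x)-\pi\psi(x)|\le M_R:=\|u\|_{L^\infty(\nu)}+R\|\psi\|_{L^\infty(\nu)}$ for $\nu$-a.e. $x$, and together with the trivial inequality $|u-\pi\psi|^2\le 2(|u|^2+R^2|\psi|^2)$ I obtain the dominating function
$$h_R(x):=\alpha\, e^{\alpha M_R}\bigl(|u(x)|^2+R^2|\psi(x)|^2\bigr),$$
which lies in $L^1(\nu)$ by the $L^2(\nu)$ hypothesis on $u$ and $\psi$.

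With the dominating function in hand, holomorphy follows by the standard Morera--Fubini argument: for every closed curve $\gamma$ inside a disc $\{|\pi|\le R\}$, the bound $h_R\in L^1(\nu)$ justifies
$$\int_\gamma\!\!\int_{\R^*}g_\alpha(u(x)-\pi\psi(x))\,\nu(\ud x)\,\ud\pi=\int_{\R^*}\!\!\int_\gamma g_\alpha(u(x)-\pi\psi(x))\,\ud\pi\,\nu(\ud x)=0,$$
where the inner integral vanishes by Cauchy's theorem applied to the entire function $\pi\mapsto g_\alpha(u(x)-\pi\psi(x))$; Morera's theorem then gives holomorphy on every disc, hence on all of $\mathbb{C}$. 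The formula for the derivative is obtained by the same interchange, starting this time from Cauchy's integral formula for the first derivative applied to $\pi\mapsto g_\alpha(u(x)-\pi\psi(x))$ on a small circle around $\pi$.

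The main obstacle is producing the dominating function $h_R$: the quadratic behaviour of $g_\alpha$ near the origin is exactly what couples with the $L^2(\nu)$ hypothesis to give integrability against the (possibly infinite) L\'evy measure, while the $L^\infty(\nu)$ hypothesis is used only to keep the exponential factor $\exp(\alpha|u-\pi\psi|)$ under uniform control on compacta of $\pi$. Without both assumptions simultaneously one cannot close the estimate, which explains the choice of the space $L^2(\nu)\cap L^\infty(\nu)$ in the statement.
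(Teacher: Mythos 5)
Your proof is correct. The estimates go through: the power-series bound $|e^z-z-1|\leq \tfrac12|z|^2e^{|z|}$ is valid for all complex $z$, so your majorant $h_R$ lies in $\mathrm L^1(\nu)$ on each disc $\{|\pi|\leq R\}$ precisely because of the $\mathrm L^2(\nu)\cap\mathrm L^\infty(\nu)$ hypothesis, and the Morera--Fubini argument plus Cauchy's formula for the first derivative closes the proof (for Morera you also need continuity of the parameter integral, but that is immediate by dominated convergence with the same $h_R$). The paper verifies the same three structural facts --- measurability in $x$, holomorphy of $\pi\mapsto g_\alpha(u(x)-\pi\psi(x))$ for each $x$, and an integrability condition --- but then outsources the conclusion to the cited theorem of Mattner on complex differentiation under the integral, which requires only that $\pi\mapsto\int_{\R^*}|g_\alpha(u(x)-\pi\psi(x))|\nu(\ud x)$ be \emph{locally bounded}, with no pointwise dominating function; that local boundedness is checked there by observing that the integrand is nonnegative and convex in $\pi$ (hence maximized at the endpoints of any compact interval) and that its value at the endpoints is $\nu$-integrable since $g_\alpha$ is quadratic near the origin and $u,\psi\in\mathrm L^2(\nu)\cap\mathrm L^\infty(\nu)$. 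Your route buys self-containedness and an explicit, quantitative majorant valid uniformly for complex $\pi$ on compacta, which is arguably cleaner than the paper's boundary-maximum argument; the paper's route buys brevity, since the analytic machinery is delegated to the cited result and only the weaker local-boundedness hypothesis has to be verified. Both proofs ultimately rest on the same two ingredients: the quadratic vanishing of $g_\alpha$ at $0$ paired with $u,\psi\in\mathrm L^2(\nu)$, and the $\mathrm L^\infty(\nu)$ control of the exponential growth of $g_\alpha$.
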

\begin{proof}
The result follows from results on differentiability under the integral sign (see e.g. \cite{mattner2001complex}), thanks to the following properties:
\begin{enumerate}[1)]
\item $g_\alpha(u(\cdot) - \pi\psi(\cdot))$ is $\mathcal B(\R^*)-$measurable for every $\pi\in\R$.\\
This simply follows from the fact that both $u$ and $\psi$ are mesaurable functions and that affine transformation as well as the function $g_\alpha$ are measurable operations.
\item $\pi\mapsto g_\alpha(u(x) - \pi\psi(x))$ is holomorphic for every $x\in\R^*$.\\
This follows from the fact that this function is the composition of the holomorphic function $g_\alpha$ with an affine function of the argument.
\item $\int_{\R^*}|g_\alpha(u(x) - \pi\psi(x))|\nu(\ud x)$ is locally bounded in $\pi$.\\
Indeed, the function $\pi\mapsto g_\alpha(u(x) - \pi\psi(x))$ is non-negative, continuous and concave and achieves its maximum at the boundary in every compact set. Moreover this maximum is integrable, since $u,\psi\in \mathrm{L}^2(\nu)\cap \mathrm{L}^\infty(\nu)$ and the function $g_\alpha$ is quadratic around $0$, which yields that $g_\alpha(u(x) - \pi\psi(x))\in\mathrm L^1(\nu)$.
\end{enumerate}
\end{proof}
\begin{proof}
Fix $t\in[0,T].$\\
Lemma \ref{differentiation under integral} implies that $\lambda$ is a holomorphic function of $\pi$ and moreover allows us to differentiate under the integral sign. Differentiating, we get 
\begin{equation*}
\lambda'(\pi) = \int_{\R^*} \psi_t(x)\left(1-e^{-\alpha (\pi\psi_t(x)-u(x))}\right)\nu(\ud x) -\varphi_t.
\end{equation*}
Lemma \ref{differentiation under integral} again guarantees that $\lambda'$ is differentiable, $\lambda''$ is continuous and it is given by
\begin{equation*}
\lambda''(\pi) = \int_{\R^*} \alpha (\psi_t(x))^2e^{-\alpha(\pi\psi_t(x) - u(x))}\nu(\ud x)\geq 0.
\end{equation*}
This implies that $\lambda'$ is increasing and that $\lambda$ is a convex function. Thanks to convexity, we have that $\lambda$ admits a minimum if $\lambda(\pi)\to+\infty$ for $\pi\to\pm \infty$.\\\\
If we show that $\lambda'$ has a zero and $\lambda'\neq 0$ for $\|\pi\|$ big enough, we can conclude, since then:
\begin{itemize}
\item $\lambda'$ is bounded from below and strictly positive for values of $\pi$ big enough, which implies that $\lambda\uparrow + \infty$ for $\pi\to+\infty$
\item $\lambda'$ is bounded from above and negative for values of $\pi$ small enough, which implies that $\lambda\uparrow+\infty$ for $\pi\to-\infty$.
\end{itemize}
We now distinguish two cases:\\
\noindent {\scshape Case 1}: Assume that $\nu(\{\psi_t>0\})>0$ and  $\nu(\{\psi_t<0\})>0$.\\
Then for $\pi\leq 0$
\begin{align*}
\lambda''(\pi) &\geq \int_{\{\psi_t>0\}}\alpha(\psi_t(x))^2e^{-\alpha(\pi\psi_t(x) - u(x))}\nu(\ud x)\\
&\geq  \int_{\{\psi_t>0\}}\alpha(\psi_t(x))^2e^{\alpha u(x)}\nu(\ud x)>0.
\end{align*}
and for $\pi\geq 0$
\begin{align*}
\lambda''(\pi) &\geq \int_{\{\psi_t<0\}}\alpha(\psi_t(x))^2e^{-\alpha(\pi\psi_t(x) - u(x))}\nu(\ud x)\\
&\geq  \int_{\{\psi_t<0\}}\alpha(\psi_t(x))^2e^{\alpha u(x)}\nu(\ud x)>0.
\end{align*}
It follows that $\lambda''$ is everywhere bounded from below by a positive number, and hence $\lim_{\pi\to+\infty}\lambda'(\pi)=+\infty$ and $\lim_{\pi\to-\infty}\lambda'(\pi)=-\infty$, and $\lambda'$ hits the origin.\\
\noindent {\scshape Case 2}: Assume that either $\nu(\{\psi_t>0\})=0$ or  $\nu(\{\psi_t<0\})=0$.\\
By symmetry, it is enough to consider $\nu(\{\psi_t>0\}) >0$ and $\nu(\{\psi_t<0\})=0$.\\
In this case it still holds that $\lim_{\pi\to-\infty}\lambda'(\pi)=-\infty$, but as $\pi\to+\infty$ its limit is not necessarily $+\infty$.\\
If $\int_{\{\psi_t>0\}}\psi_t(x) \nu(\ud x) = +\infty$,
\begin{align*}
\lim_{\pi\to+\infty}\lambda'(\pi) &= \lim_{\pi\to+\infty} \int_{\{\psi_t>0\}}\psi_t(x)\left(1-e^{-\alpha(\pi\psi_t(x) - u(x))}\right)\nu(\ud x) - \varphi_t\\
&= \int_{\{\psi_t>0\}}\psi_t(x)\nu(\ud x) -\varphi_t= +\infty,
\end{align*}
where in the first equality we used Lemma \ref{differentiation under integral}.\\
If $\int_{\{\psi_t>0\}}\psi_t(x) \nu(\ud x) < +\infty$, under the assumption $\varphi_t <\int_{\{\psi_t>0\}}\psi_t(x) \nu(\ud x)$, it holds that $\lambda'$ is again continuous and increasing and
\begin{equation*}
\lim_{\pi\to+\infty}\lambda'(\pi) = \int_{\{\psi_t>0\}}\psi_t(x) \nu(\ud x) -\varphi_t > 0.
\end{equation*}
Therefore by the mean value theorem we can conclude.

The above analysis of the second derivative shows that $\lambda$ is strictly convex and therefore for $C=\R$ the minimum is unique.
\end{proof}
As a corollary of the previous result, we obtain the well-posedness of the generator in \eqref{generator}.

\begin{oss}
\begin{enumerate}
 \item The sufficient conditions of Theorem \ref{nflvr} ensuring the well-posedness of our optimization problem are intrisically related to the market satisfying the no free lunch with vanishing risk (NFLVR) condition.  Indeed \cite{bardhan1996martingale} prove  under the assumption that $N$ has finite activity (cf. Theorem 5.2 in \cite{bardhan1996martingale}) that our conditions in Theorem \ref{nflvr} imply NFLVR. \cite{tankov2004financial} prove for the case of exponential L\'evy models (cf. Proposition 9.9 in \cite{tankov2004financial}) that these conditions are even necessary and sufficient for NFLVR. In some sense this link is not surprising, as the optimization problem should not be well-posed in the presence of arbitrages.
\item Most exponential L\'evy market models, e.g. NIG, variance Gamma or CGMY, have both positive and negative jumps which ensures that they satisfy the NFLVR conditions of \cite{tankov2004financial}. So in principle for these popular models the well-definedness of our generator is not a problem with the exception that none of these models has bounded jumps which, however, is an assumption needed at various occasions for our BSDE approach. Hence, the conditions of Theorem \ref{nflvr} are satisfied when one considers these standard model but truncates the L\'evy measure at a very high level which from the practical point of view should be a rather innocent modification. 
\end{enumerate}

\end{oss}

Note that when $U_s$, $\psi_s$ and $\varphi_s$ satisfy the conditions of Theorem \ref{nflvr} and are predictable, then a predictable $\pi^*$ such that $\pi_s^*\in\argmin_{\pi\in C}\left\{\int_{\R^*} g_\alpha(u(x) - \pi\psi_s(x)) \nu(\ud x) -\pi\varphi_s\right\}$ can be chosen due to arguments using a measurable selection theorem as in Lemma 6 in \cite{morlais2009utility} and the proof of Theorem 3 in \cite[]{morlais2010new}. 

However, this does not imply that $\pi^*$ has the required square integrability properties to be admissible. To attack this question we first bound the arguments where the minimum is attained.
\begin{Prop}\label{minbound}
 Assume that $\nu(\psi_t>0)>0$ and $\nu(\psi_t<0)>0$. Let $c,C\in (0,\infty)$ be such that $\nu(\psi_t>C)>0$ and $\nu(\psi_t<-c)>0$. Then for any $u\in L^2\cap L^\infty(\nu)$ and \begin{equation*}\pi_t^*\in\argmin_{\pi\in C}\left\{\int_{\R^*} g_\alpha(u(x) - \pi\psi_t(x)) \nu(\ud x) -\pi\varphi_t\right\}\end{equation*} it holds that almost surely
\begin{eqnarray*}
&&-3\frac{\|u\|_\infty}C-2\frac{|\varphi_t|}{\alpha\nu(\psi_t>C)C^2}-\frac{\sqrt{2}}{\sqrt{\alpha\nu(\psi_t>C)}C} \sqrt{|u|_\alpha}\\&&\leq 
\pi_t^* \leq 3\frac{\|u\|_\infty}c+2\frac{|\varphi_t|}{\alpha\nu(\psi_t<-c)c^2}+\frac{\sqrt{2}}{\sqrt{\alpha\nu(\psi_t<-c)}c} \sqrt{|u|_\alpha}.
\end{eqnarray*}

\end{Prop}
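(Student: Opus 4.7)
The strategy is to exploit the elementary bound $\lambda(\pi_t^*)\leq\lambda(0)=|u|_\alpha$, which holds because $0\in C$ by the standing assumption on the admissible set, and to show that $\lambda(\pi)$ strictly exceeds $|u|_\alpha$ as soon as $\pi$ lies outside the claimed range. Combining the two facts will force $\pi_t^*$ to satisfy the stated bounds.

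For the upper bound the key ingredient is the elementary estimate $g_\alpha(y)\geq \tfrac{\alpha}{2}y^2$ for all $y\geq 0$, which follows from $g_\alpha(0)=g_\alpha'(0)=0$ together with $g_\alpha''(y)=\alpha e^{\alpha y}\geq \alpha$ on $[0,\infty)$. Now assume $\pi\geq \|u\|_\infty/c$. On the set $\{\psi_t<-c\}$ we have $u(x)-\pi\psi_t(x)\geq \pi c-\|u\|_\infty\geq 0$, and since $g_\alpha\geq 0$ on the complement we get
\begin{equation*}
\lambda(\pi)\geq \frac{\alpha\,\nu(\psi_t<-c)}{2}(\pi c-\|u\|_\infty)^2-\pi|\varphi_t|.
\end{equation*}
Writing $K:=\nu(\psi_t<-c)$ and combining this with $\lambda(\pi_t^*)\leq|u|_\alpha$, I would obtain the quadratic inequality
\begin{equation*}
\frac{\alpha K c^2}{2}(\pi_t^*)^2-\bigl(\alpha Kc\|u\|_\infty+|\varphi_t|\bigr)\pi_t^*+\frac{\alpha K\|u\|_\infty^2}{2}-|u|_\alpha\leq 0,
\end{equation*}
valid provided $\pi_t^*\geq \|u\|_\infty/c$; the opposite case is trivially absorbed by the dominant $3\|u\|_\infty/c$ term. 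Solving this via the standard quadratic formula, bounding the square root of the discriminant by the elementary inequality $\sqrt{a+b+d}\leq\sqrt{a}+\sqrt{b}+\sqrt{d}$ and then absorbing the cross term $\sqrt{\|u\|_\infty|\varphi_t|/c}$ by an AM--GM split $\sqrt{xy}\leq (x+y)/2$ (with $x$ and $y$ chosen so as to reproduce exactly the summands $\|u\|_\infty/c$ and $|\varphi_t|/(\alpha Kc^2)$) yields the claimed upper bound.

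The lower bound on $\pi_t^*$ is obtained by the completely symmetric argument, this time exploiting $\{\psi_t>C\}$ and $\pi\leq -\|u\|_\infty/C$: on that set $u(x)-\pi\psi_t(x)\geq |\pi|C-\|u\|_\infty\geq 0$ and $-\pi\varphi_t\geq -|\pi||\varphi_t|$, which gives the analogue
\begin{equation*}
\lambda(\pi)\geq \frac{\alpha\,\nu(\psi_t>C)}{2}\bigl(|\pi|C-\|u\|_\infty\bigr)^2-|\pi||\varphi_t|,
\end{equation*}
and the identical algebraic manipulation delivers the symmetric lower bound. The conceptual part of the argument is straightforward once one has the comparison with $\lambda(0)$ and the quadratic minorization of $g_\alpha$; the only real obstacle is the bookkeeping needed to produce precisely the constants $3$, $2$ and $\sqrt{2}$ in the final inequalities, which depends on choosing the weights in the various AM--GM splits correctly.
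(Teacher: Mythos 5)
Your proposal is correct and follows essentially the same route as the paper's proof: compare $\lambda(\pi_t^*)$ with $\lambda(0)=|u|_\alpha$, drop the integral outside $\{\psi_t<-c\}$ (resp.\ $\{\psi_t>C\}$), minorize $g_\alpha(y)$ by $\tfrac{\alpha}{2}y^2$ for $y\geq 0$, and bound the relevant root of the resulting quadratic via subadditivity of the square root; the only cosmetic difference is that you write out the upper bound and the paper writes out the lower one. (As you anticipate, the final bookkeeping is the delicate point: the natural AM--GM split of the cross term gives the constants in the order $2\|u\|_\infty/c$ and $3|\varphi_t|/(\alpha\nu(\psi_t<-c)c^2)$ rather than $3$ and $2$ --- a wrinkle already present in the paper's own displayed chain of inequalities.)
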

\begin{proof}
 We only prove the lower bound 
as the proof of the upper one is completely analogous.

Since $0\in C$ and $\lambda$ is strictly convex it suffices to show that for $\pi$ small enough $\lambda(\pi)$ is strictly bigger than $\lambda(0)=|u|_\alpha$, as then the infimum definitely cannot be at these small enough $\pi$. 

We first observe that $g_\alpha$ is non-negative. Hence,
\begin{equation*}
 \lambda(\pi)\geq \int_{\psi_t>C} g_\alpha(u(x) - \pi\psi_t(x)) \nu(\ud x) -\pi\varphi_t.
\end{equation*}
For $\pi \leq -\frac{\|u\|_\infty}{C}$ we have that $u(x)-\pi\psi_t(x)\geq 0$ and so
\begin{eqnarray*}
 \lambda(\pi) &\geq \int_{\psi_t>C}\frac{\alpha}2(u(x)-\pi\psi_t(x))^2\nu(\ud x)-\pi\varphi_t \geq \int_{\psi_t>C}\frac{\alpha}2(\|u\|_\infty+\pi C)^2\nu(\ud x)-\pi\varphi_t\\
&=\frac{\alpha}2\nu(\psi_t>C)\left(C^2\pi^2+\|u\|_\infty^2+2\|u\|_\infty C\pi\right)-\pi\varphi_t.
\end{eqnarray*}
So the question is whether 
\begin{equation*}
 \frac{\alpha}2\nu(\psi_t>C)C^2\pi^2+({\alpha}\nu(\psi_t>C)\|u\|_\infty C-\varphi_t)\pi+\left(\frac{\alpha}2\nu(\psi_t>C)\|u\|_\infty^2-|u|_\alpha\right)
\end{equation*}
is strictly positive. Since in $\pi$ this is a quadratic function, it is elementary to see that this is either always the case or for all $\pi$ smaller than  
\begin{eqnarray*}
 &\frac{-{\alpha}\nu(\psi_t>C)\|u\|_\infty C+\varphi_t-\sqrt{({\alpha}\nu(\psi_t>C)\|u\|_\infty C-\varphi_t)^2-2\alpha\nu(\psi_t>C)C^2\left(\frac{\alpha}2\nu(\psi_t>C)\|u\|_\infty^2-|u|_\alpha\right)}}{\alpha\nu(\psi_t>C)C^2}\\
&\geq \frac{-\|u\|_\infty}C-\frac{|\varphi_t|}{\alpha\nu(\psi_t>C)C^2}-\frac{\|u\|_\infty}C-\frac{|\varphi_t|}{\alpha\nu(\psi_t>C)C^2}-\frac{\|u\|_\infty}C-\frac{\sqrt{2}}{\sqrt{\alpha\nu(\psi_t>C)}C} \sqrt{|u|_\alpha}\\&=-3\frac{\|u\|_\infty}C-2\frac{|\varphi_t|}{\alpha\nu(\psi_t>C)C^2}-\frac{\sqrt{2}}{\sqrt{\alpha\nu(\psi_t>C)}C} \sqrt{|u|_\alpha}
\end{eqnarray*}
using that $\sqrt{a+b}\leq\sqrt a+ \sqrt b$ for $a,b\geq 0$.
\end{proof}

Now we want to link $|u|_\alpha$ to $|u|_{L^2(\nu)}$. For this we need a suitable version of the second part of Corollary 1 in \cite{morlais2009utility} which the following general result provides.
\begin{Lem}\label{BMOequivalence}
Let $g^{(1)},g^{(2)}:\R\to\R$ be two continuous non-negative functions whose only zero is at the origin, with the property that $\exists \epsilon>0,$ and two constants $c_1, c_2>0$ such that $$c_1g^{(1)}(h)\leq g^{(2)}(h) \leq c_2g^{(1)}(h) \quad\text{for }|h|<\epsilon.$$
For any $H(x)$ which is ($\nu$-a.e.) bounded it holds that $\exists \ K>0$ such that  
\begin{align*}
\frac1K \int_{\R^*} g^{(1)}(H(x))\nu(\ud x)&\leq \int_{\R^*}g^{(2)}(H(x))\nu(\ud x)\\
&\leq K\int_{\R^*}g^{(1)}(H(x))\nu(\ud x).
\end{align*}

The constant $K$ can be taken such that it only depends on the functions $g^{(1)}$ and $g^{(2)}$ and the essential upper bound on $H$. 
\end{Lem}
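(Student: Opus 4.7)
The plan is to split the integration domain $\R^*$ according to whether $|H(x)|$ is small (where the hypothesis $c_1 g^{(1)}\leq g^{(2)} \leq c_2 g^{(1)}$ applies pointwise) or not-so-small (where a compactness argument yields a global comparison, using that $H$ is essentially bounded).

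More concretely, let $M := \operatorname{ess\,sup}_{\nu}|H|<\infty$ and set
\begin{equation*}
A := \{x\in\R^* : |H(x)|<\epsilon\}, \qquad B := \{x\in\R^* : \epsilon\leq |H(x)|\leq M\},
\end{equation*}
so that $A\cup B$ covers $\R^*$ up to a $\nu$-null set. On $A$ the assumed local comparison gives directly
\begin{equation*}
c_1\int_A g^{(1)}(H(x))\,\nu(\ud x) \;\leq\; \int_A g^{(2)}(H(x))\,\nu(\ud x) \;\leq\; c_2\int_A g^{(1)}(H(x))\,\nu(\ud x).
\end{equation*}

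For the integral over $B$, I would use that the set $K_\epsilon := \{h\in\R : \epsilon\leq |h|\leq M\}$ is compact and does not contain $0$. Since $g^{(1)}$ and $g^{(2)}$ are continuous and their only zero is at the origin, they attain strictly positive minima $m_i:=\min_{K_\epsilon}g^{(i)}>0$ and finite maxima $M_i:=\max_{K_\epsilon}g^{(i)}<\infty$. Hence for every $x\in B$,
\begin{equation*}
\frac{m_2}{M_1}\,g^{(1)}(H(x)) \;\leq\; g^{(2)}(H(x)) \;\leq\; \frac{M_2}{m_1}\,g^{(1)}(H(x)),
\end{equation*}
and integrating over $B$ gives the analogous two-sided bound on $B$.

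Putting the two pieces together and taking $K := \max\{c_2,\, M_2/m_1,\, 1/c_1,\, M_1/m_2\}$ yields the claimed inequalities. The constants $c_1,c_2,\epsilon$ depend only on $g^{(1)},g^{(2)}$, while $m_i,M_i$ depend only on $g^{(1)},g^{(2)}$ and on $M$, so $K$ has the advertised dependence. The only mild obstacle is to ensure the set $B$ is truly contained in the compact annulus $K_\epsilon$, which is handled by redefining $H$ on a $\nu$-null set if necessary; this does not affect any of the integrals involved.
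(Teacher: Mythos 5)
Your proposal is correct and follows essentially the same route as the paper's proof: split the domain into $\{|H(x)|<\epsilon\}$, where the hypothesis applies pointwise, and the complementary set, where boundedness of $H$ and the fact that $g^{(1)},g^{(2)}$ are continuous and vanish only at the origin give a two-sided ratio bound. Your compactness argument just makes explicit the constants that the paper introduces as $c_3,c_4$ without further detail.
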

\begin{proof}
Let $\epsilon$ be as in the hypothesis. We have that
\begin{align*}
c_1  \int_{\{|H(x)|<\epsilon\}\cap \R^*}g^{(1)}(H(x))\nu(\ud x)\ &\leq \int_{\{|H(x)|<\epsilon\}\cap \R^*}g^{(2)}(H(x))\nu(\ud x)\\
&\leq c_2\int_{\{|H(x)|<\epsilon\}\cap \R^*}g^{(1)}(H(x))\nu(\ud x).
\end{align*}
On the other hand, the only zero of the two functions is in zero, and since $H$ is bounded $\exists c_3,c_4>0$ such that
\begin{equation}\label{bounds function - bdd process}
c_3\leq\frac{g^{(2)}(H(x))}{g^{(1)}(H(x))}\leq c_4
\end{equation}
on the set $\{|H(x)|\geq\epsilon\}$.\\
Summing up, we obtain that
\begin{align*}
&(c_1\wedge c_3)\!\! \left( \int_{\{|H(x)|<\epsilon\} \cap \R^*}\!\!\! g^{(1)}(H(x))\nu(\ud x) \! +\! \int_{\{|H(x)|\geq \epsilon\}\cap \R^*}\!\!\!g^{(1)}(H(x))\nu (\ud x)\right) \leq\\
&\int_{\R^*}g^{(2)}(H(x))\nu(\ud x)\leq\\
&\left( c_2\vee c_4\right)\!\!\left(\int_{\{|H(x)|<\epsilon\}\cap \R^*}\!\!\!g^{(1)}(H(x))\nu(\ud x)+\int_{\{|H(x)|\geq \epsilon\}\cap \R^*}\!\!\!g^{(1)}(H(x))\nu(\ud x)\right).
\end{align*}
\end{proof}
 
Since $g_\alpha(x)\sim x^2$ for $x\to 0$, the above lemma implies that for bounded $u$ it holds that $|u|_\alpha/K\leq |u|^2_{L^2(\nu)}\leq K |u|_\alpha$ with $K$ only depending on the bound for $u$.

\begin{Prop}\label{intprop}
 Assume  
\begin{itemize}
 \item there exist $c,C,\delta>0$ with $\nu(\psi_t>C)>\delta$ and $\nu(\psi_t<-c)>\delta$,
\item  $|\psi_s|_{L^2(\nu)}$ is bounded on $\Omega\times[0,T]$,
\item the BSDE \eqref{bsde} has a solution with $U\in L^2(\tilde N_p)$ being $\lambda\otimes P$-a.e. bounded and $Z\in L^2(W)$. 
\end{itemize}
Then there exists a predictable $\pi^*$ such that
\begin{enumerate}
\item $\pi_t^*\in\argmin_{\pi\in C}\left\{\int_{\R^*} g_\alpha(U_t(x) - \pi\psi_t(x)) \nu(\ud x) -\pi\varphi_t\right\}$ $\lambda\otimes P$-a.e.,
\item $\int_0^T|\pi^*_s\varphi_s|ds\in L^2(\Omega,P)$ and $\pi^*\psi\in L^2(\widetilde N_p)$,
\item $U(X^{\pi^*,0,x}-Y_t)$ is  a local martingale. 
\end{enumerate}

\end{Prop}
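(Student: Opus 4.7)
The plan is to establish (i), (ii), (iii) in turn.

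\emph{Step 1 (predictable selection, (i)).} The uniform lower bounds $\nu(\psi_t>C)>\delta$ and $\nu(\psi_t<-c)>\delta$ place us, pointwise in $(t,\omega)$, in Case 1 of the proof of Theorem \ref{nflvr}, so the existence of a minimiser in $C$ holds without any drift condition on $\varphi$. As already observed in the paragraph preceding Proposition \ref{minbound}, a predictable selection $\pi^*$ satisfying (i) can then be obtained via the measurable selection theorem, arguing as in Lemma 6 of \cite{morlais2009utility} and the proof of Theorem 3 of \cite{morlais2010new}.

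\emph{Step 2 (integrability, (ii)).} I would plug $\pi^*$ into the bound of Proposition \ref{minbound} at each $(\omega,t)$. Since $\varphi$ is uniformly bounded by the standing model assumptions, $U$ is $\lambda\otimes P$-a.e.\ bounded by hypothesis, and $c,C,\delta$ are deterministic, this yields
\begin{equation*}
|\pi^*_t| \leq K_0 + K_1 \sqrt{|U_t|_\alpha}
\end{equation*}
for deterministic constants $K_0,K_1$ depending only on $\alpha$, $c$, $C$, $\delta$, $\|\varphi\|_\infty$ and $\|U\|_\infty$. Using that $g_\alpha(h)\sim \frac{\alpha}{2}h^2$ as $h\to 0$, Lemma \ref{BMOequivalence} applied with $g^{(1)}(h)=h^2$ and $g^{(2)}=g_\alpha$ produces a constant $K_2$, again depending only on $\alpha$ and $\|U\|_\infty$, such that $|U_t|_\alpha\leq K_2\,|U_t|^2_{\mathrm L^2(\nu)}$ $\lambda\otimes P$-a.e. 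Squaring and combining,
\begin{equation*}
(\pi^*_t)^2 \leq 2K_0^2 + 2K_1^2 K_2\,|U_t|^2_{\mathrm L^2(\nu)}.
\end{equation*}
Since $U\in \mathrm L^2(\tilde N_p)$, this gives $E\bigl[\int_0^T (\pi^*_s)^2\,ds\bigr]<\infty$, and Lemma \ref{intlem} then delivers both $\int_0^T|\pi^*_s\varphi_s|\,ds\in L^2(\Omega,\P)$ and $\pi^*\psi\in \mathrm L^2(\tilde N_p)$.

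\emph{Step 3 (local martingale property, (iii)).} In the Itô decomposition of $U(X^{\pi,0,x}-Y)$ computed just before \eqref{generator}, the finite variation part carries the integrand
\begin{equation*}
-\alpha\,U(X_{t-}-Y_{t-})\bigl[f(t,Y_{t-},Z_t,U_t) - \tfrac{\alpha}{2}Z_t^2 + \pi_t\varphi_t - \int_{\R^*} g_\alpha(U_t(x)-\pi_t\psi_t(x))\nu(\ud x)\bigr],
\end{equation*}
which, by the very definition \eqref{generator} of $f$, vanishes $\lambda\otimes P$-a.e.\ for $\pi=\pi^*$. Hence only the two stochastic integrals with respect to $\ud W$ and $\tilde N_p$ survive. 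Using $Z\in \mathrm L^2(W)$, the essential boundedness of $U$, and the estimate on $\pi^*\psi$ from (ii), a standard localisation keeping the prefactor $U(X_{t-}-Y_{t-})$ locally bounded makes both stochastic integrals genuine martingales on each stochastic interval, giving the local martingale property of $U(X^{\pi^*,0,x}-Y)$.

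\emph{Main difficulty.} The only genuinely delicate point is Step 2: one must convert the pointwise bound of Proposition \ref{minbound}, which a priori involves the random quantity $\sqrt{|U_t|_\alpha}$, into a truly integrable estimate on $(\pi^*)^2$. The BMO-type equivalence of Lemma \ref{BMOequivalence} is what makes this possible, \emph{provided} that $U$ is essentially bounded -- this is precisely why the third hypothesis of the proposition asks not only for $U\in \mathrm L^2(\tilde N_p)$ but also for $\lambda\otimes P$-a.e.\ boundedness of $U$.
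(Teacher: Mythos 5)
Your proposal is correct and follows essentially the same route as the paper's own proof: predictable selection for (i), the bound of Proposition \ref{minbound} combined with Lemma \ref{BMOequivalence} to get $|\pi^*_s|^2\leq K+K'|U_s|^2_{\mathrm L^2(\nu)}$ and hence (ii) via Lemma \ref{intlem}, and the vanishing of the finite variation part by the definition of the generator for (iii). You simply spell out the constants and the localisation in more detail than the paper does.
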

\begin{proof}
We already know that we can find a predictable $\pi^*$ satisfying (i).

 Proposition \ref{minbound} and Lemma \ref{BMOequivalence} show that under the assumptions there are finite constants $K,K'>0$ such that
\begin{equation}
 |\pi_s^*|^2\leq K+K'|U_s|_{L^2(\nu)}^2.\label{eqpibound}
\end{equation}

As $U\in L^2(\tilde N_p)$ this implies, $E\left(\int_0^T|\pi_s|^2ds\right)<\infty$ and so Lemma \ref{intlem} shows (ii).

Finally, (iii) follows immediately, because by the definition of the generator the finite variation part of $U(X^{\pi^*,0,x}-Y_t)$ vanishes.

\end{proof}

Note that we are not discussing whether the BSDE does admit a unique solution. Later on we will give a concrete example where it turns out that our results can be used to solve an optimal hedging problem without discussing these issues.
The last general result is now to strengthen our conditions further in order to ensure admissibility and optimality.
\begin{Teo}\label{thoptimality}
 Assume all conditions of Proposition \ref{intprop} are satisfied and let $\pi^*$ be as characterised there. Assume additionally $|U_s|_{L^2(\nu)}$ is $\lambda\otimes P$-a.e. bounded.
\begin{enumerate}
\item Then $\pi^*$ is bounded and $\pi^*\in \mathcal{A}$.
\item If furthermore $dH_t:=\alpha Z_t \ud W_t +\int_{\R^*} (e^{-\alpha(\pi^*_t\psi_t(x)-U_t(x))}-1)\tilde N(\ud t, \ud x)$ defines a BMO martingale, then $\pi^*$ is an optimal strategy.  
\item If additionally $ Z$ is bounded (recall that boundedness of $U$ was already assumed in Proposition \ref{intprop}) and $U(X_0^{\pi^*}-Y_0)$ has finite expectation, then the process $dH_t:=\allowbreak\alpha Z_t \ud W_t \allowbreak+\int_{\R^*} (e^{-\alpha(\pi^*_t\psi_t(x)-U_t(x))}-1)\tilde N(\ud t, \ud x)$ is a BMO martingale and $\pi^*$ is an optimal strategy.
\end{enumerate}
\end{Teo}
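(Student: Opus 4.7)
The idea is to combine the explicit argmin bound from Proposition \ref{minbound} with Lemma \ref{BMOequivalence} to conclude that $\pi^*$ is essentially bounded. The ingredients of the bound in Proposition \ref{minbound}---namely $\|U_s\|_\infty$, $|\varphi_s|$, the quantity $|U_s|_\alpha$ (which by Lemma \ref{BMOequivalence} is equivalent to $|U_s|_{L^2(\nu)}^2$), and $\nu(\psi_t>C)$, $\nu(\psi_t<-c)$---are all uniformly bounded above or away from zero by the hypotheses of Proposition \ref{intprop} and the new hypothesis that $|U_s|_{L^2(\nu)}$ is bounded. Once $\pi^*\in L^\infty(\lambda\otimes P)$, the integrability requirements in Definition \ref{admissible set} follow at once from Lemma \ref{intlem}. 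For the uniform integrability condition, I would apply It\^o's formula to $\exp(-\alpha X^{\pi^*}_t)$ and rewrite it as $e^{-\alpha x}\exp\bigl(\int_0^t\beta_s\,\ud s\bigr)\,\mathcal{E}(M)_t$, where $\beta$ is a bounded drift and $M$ is a pure-jump local martingale with bounded jumps and bounded predictable quadratic variation. Hence $M$ is BMO and $\Delta M$ is uniformly bounded away from $-1$, so the jump version of Kazamaki's theorem implies $\mathcal{E}(M)$ is a uniformly integrable martingale, whence the family $\{\exp(-\alpha X^{\pi^*}_\tau)\}_\tau$ is UI.

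\textbf{Part (ii).} I follow the martingale optimality principle as sketched in Section \ref{sec:Model}. The It\^o expansion of $U(X^\pi-Y)$ has a finite-variation part whose sign is controlled by the definition of $f$ as an infimum: as $U<0$, this makes $U(X^\pi-Y)$ a local supermartingale for every admissible $\pi$. The admissibility UI condition together with boundedness of $Y$ (a standard BSDE property for bounded $B$) upgrades this to a true supermartingale. For $\pi^*$ the finite-variation part vanishes, giving $U(X^{\pi^*}_t-Y_t)=U(x-Y_0)\,\mathcal{E}(H)_t$. The jumps $\Delta H_t = e^{-\alpha(\pi^*_t\psi_t(x)-U_t(x))}-1$ are uniformly bounded away from $-1$ since $\pi^*\psi-U$ is bounded by (i) and the hypotheses. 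Combined with the assumed BMO property of $H$, Kazamaki's theorem yields that $\mathcal{E}(H)$ is a UI martingale, so $U(X^{\pi^*}-Y)$ is a true martingale. The comparison $E[U(X_T^\pi-B)]\leq U(x-Y_0)=E[U(X_T^{\pi^*}-B)]$ for every admissible $\pi$ then gives optimality.

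\textbf{Part (iii).} The plan is simply to verify $H\in\mathrm{BMO}$ and reduce to (ii). From
\begin{equation*}
\langle H\rangle_T-\langle H\rangle_\tau = \int_\tau^T\!\alpha^2 Z_s^2\,\ud s + \int_\tau^T\!\!\int_{\R^*}\!\bigl(e^{-\alpha(\pi^*_s\psi_s(x)-U_s(x))}-1\bigr)^2\nu(\ud x)\ud s,
\end{equation*}
the first term is bounded by $T\alpha^2\|Z\|_\infty^2$ under the new hypothesis. For the second term, part (i) gives $\pi^*$ bounded and the other hypotheses give $\psi$ and $U$ bounded, so $\pi^*\psi-U$ is uniformly bounded; a Taylor/Lemma \ref{BMOequivalence} argument (applied with $g^{(1)}(y)=y^2$ and $g^{(2)}(y)=(e^{-\alpha y}-1)^2$) then yields $(e^{-\alpha h}-1)^2\leq K h^2$ uniformly, so the $\nu$-integral is bounded by $2K\bigl((\pi^*_s)^2|\psi_s|_{L^2(\nu)}^2+|U_s|_{L^2(\nu)}^2\bigr)$, itself uniformly bounded. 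Hence $\sup_\tau\|E[\langle H\rangle_T-\langle H\rangle_\tau\mid\F_\tau]\|_\infty<\infty$, i.e., $H\in\mathrm{BMO}$. The finite-expectation hypothesis on $U(X_0^{\pi^*}-Y_0)$ guarantees that $U(x-Y_0)$ is finite so that the comparison in (ii) is meaningful.

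\textbf{Main obstacle.} The main technical hurdle is the jump version of Kazamaki's theorem, which requires not only the BMO property but also a uniform lower bound $\Delta H > -1+\delta$ for some $\delta>0$; securing this forces us back to the $L^\infty$ bound on $\pi^*$ from (i). A secondary subtlety is the upgrade from local supermartingale to true supermartingale for arbitrary admissible $\pi$, which needs boundedness of $Y$---a property not stated in the proposition but available in the standard BSDE framework under bounded terminal data $B$.
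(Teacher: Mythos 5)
Your proposal is correct and follows essentially the same route as the paper: boundedness of $\pi^*$ from the bound \eqref{eqpibound} (i.e.\ Proposition \ref{minbound} plus Lemma \ref{BMOequivalence}), the multiplicative decomposition $U(X^{\pi^*}-Y)=U(X_0^{\pi^*}-Y_0)\mathcal{E}(H)$ with Kazamaki's criterion for optimality, and a quadratic-variation estimate (via the compensation formula and the $(e^{-\alpha h}-1)^2\leq Kh^2$ comparison) for the BMO property in (iii). The only divergence is in (i), where the paper simply reduces to a compact constraint set and cites Lemma 1 of \cite{morlais2009utility} and Lemma 2 of \cite{morlais2010new} for admissibility, whereas you re-derive the uniform integrability directly by writing $\exp(-\alpha X^{\pi^*})$ as a bounded drift factor times a stochastic exponential of a BMO jump martingale with jumps bounded away from $-1$ -- which is precisely the content of the cited lemmas, so the argument goes through.
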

\begin{proof}
 The boundedness of $\pi^*$ follows immediately from \eqref{eqpibound}. This means that we actually can view the optimization problem on our non-compact set $C$ as one on a compact constraint set where the admissibility has been shown in Lemma 1 of \cite{morlais2009utility} and Lemma 2 of \cite{morlais2010new}. 

From the choice of the generator we have that $U(X^{\pi^*}-Y)=U(X_0^{\pi^*}-Y_0)\mathcal{E}(H)$ and so by Kazamaki's criterion (see exercises after Chapter X in \cite{he1992semimartingale} or \cite{kazamaki1979}) $U(X^{\pi^*}-Y)$ is a uniformly integrable martingale. Hence, the martingale optimality principle concludes.

For (iii) it is straightforward to see that $H$ is a local martingale and it has bounded jumps. 
Moreover, for any $t\in [0,T]$ we have \begin{align*}[H,H]_T-[H,H]_t= &\int_t^T Z^2_sds+ \int_t^T\int_{\R^*} (e^{-\alpha(\pi^*_s\psi_s(x)-U_s(x))}-1)^2N(\ud s, \ud x)\\\leq &\int_t^T Z^2_sds +k\int_t^T\int_{\R^*} (\alpha(\pi^*_s\psi_s(x)-U_s(x)))^2N(\ud s, \ud x)\end{align*} with a constant $k>0$ using  an obvious variant of Lemma \ref{BMOequivalence}. Applying the compensation formula for conditional expectations (see e.g. \cite{kyprianou2006introductory}, Corollary 4.5) we get 
\begin{align*}
E([H,H]_T-[H,H]_t|\mathcal{F}_t)\leq& E\left(\left.\int_t^T Z^2_sds+k\int_t^T\int_{\R^*} (\alpha(\pi^*_s\psi_s(x)-U_s(x)))^2 \nu(\ud x)\ud s\right|\mathcal{F}_t\right)\\
\leq & KT+\tilde K E\left(\left.\int_t^T |\psi_s|_{L^2(\nu)}^2+|U_s|_{L^2(\nu)}^2ds\right|\mathcal{F}_t\right)\leq \bar K T
\end{align*}
with $K, \tilde K,\bar K>0$ being appropriate constants, because $Z,\pi, |\psi_s|_{L^2(\nu)}$ and $|U_s|_{L^2(\nu)}$ are all bounded.
 So \cite{he1992semimartingale}, Theorem 10.9, concludes and establishes that $H$ is BMO noting that we are only looking at the finite time horizon $[0,T]$. 
\end{proof}
\begin{oss}
In any exponential pure-jump L\'evy market model with both negative and positive as well as bounded jumps the first two conditions of Proposition \ref{intprop} are always satisfied. So for such models and thus in particular for ``jump-truncated versions'' of the most popular models like NIG, variance Gamma or CGMY the ``only'' question is whether the BSDE has a solution with the demanded properties. 

Regarding the need for the boundedness of the jumps we would conjecture that one could at many instances generalize our results by assuming only the finiteness of suitable (exponential) moments. However, at the moment we have no idea how to prove optimality of $\pi^*$ without using BMO arguments which  absolutely need bounded jumps.  
\end{oss}

\section{Cross-hedging in a jump market model}\label{sec:Cross hedging}
In the same market model as in the previous section, consider an additional illiquid asset with price process \xproc{I} and assume that we want to hedge a position $B=h(I_T)$ at the terminal time, for a payoff function $h:\R\to\R$ to be specified later.\\
We assume that the stock is given as in the previous sections whereas the logspread \xproc{\Xi_t=\log{S_t}-\log I} has dynamics given in terms of the one dimensional independent Brownian motion $W$ and has continuous paths. This is the simplest case. The motivation from applications is that the big moves, the jumps, occur equally in the liquid and illiquid asset and due to small fluctuations in supply and demand of the illiquid asset the price of the illiquid asset fluctuates a bit around a fixed spread (which for our crude oil/jet fuel example below would essentially be given by the costs of the refining process).  However, as we will see the log-spread can also have jumps again.\\
This setting can be e.g. suitable for a company that wants to hedge itself against losses for a massive rise in the price of a needed commodity which is not liquidly traded. A classical example is the one of ``fuel hedging'', where a company (an airline) needs to buy jet fuel on a regular basis. If no futures on the needed commodity are available in the market, the company could think of buying futures in another commodity whose price is strongly correlated with the price of the needed one (in the example above, the company could decide to buy futures on crude oil, which is needed in the production of fuel). We assume, in the simplest case, that the logspread of the two prices evolves according to a diffusion, but has a mean reverting behaviour. See, for example, \cite{AnkirchnerImkeller2011} or \cite{Ankirchneretal2012} and the references therein for a comprehensive introduction to such cross-hedging problems.
\\
It is well-known (cf. e.g. \cite{kallsen2002cumulant}) that the stochastic exponential can always be represented as a classical exponential of another process, denoted here  by $N := \log S$. With this notation, which is more convenient in the following, we have that $I = \exp(N-\Xi)$.

\subsection{Explicit solutions of FBSDEs with affine forward dynamics}
Take the couple $R=(N,\Xi)$ as source of uncertainty.\\
The discounted logarithmic price process will have dynamics \systeq{\ud N_t}{\beta\ud t + \int_{\R^*}\gamma(x)\tilde N_p(\ud t, \ud x)}{N_0}{n} where $\gamma := \log(\psi + 1)$ and $\beta := \varphi-\int_{\R^*}(e^{\gamma(x)}-1-\gamma(x))\nu(\ud x)$. Assume that $\beta, \gamma$ are deterministic and time-independent, and that $\varphi, \psi$  (as implicitly defined by $\beta, \gamma$) satisfy the hypotheses of Theorem \ref{nflvr}. Note that, as announced, we have now changed the parametrisation of our model looking at the dynamics of the logarithmic prices instead of the stochastic logarithm of the prices, as this is more convenient for writing down the model. The solutions of the BSDEs will, however, later on be partially expressed using both parametrizations in order to state everything conveniently.

In the simplest case we assume the logspread to be given by a (mean-reverting) Gaussian Ornstein-Uhlenbeck process with mean zero, i.e. \systeq{\label{spread}\ud \Xi_t}{-B\Xi_t\ud t + \Sigma \ud W_t}{\Xi_0}{\xi,}
where $B,\Sigma$ are real constants, $\Sigma>0$.

However, our approach below works just as well for general Ornstein-Uhlenbeck type dynamics, i.e. 
 \systeq{\label{genspread}\ud \Xi_t}{(b-B\Xi_t)\ud t + \Sigma \ud W_t+\int_{\R^*}\gamma_{\Xi}(x)\tilde N_p(dx,dt)}{\Xi_0}{\xi,}
with an additional constant $b\in\R$ and bounded $\gamma_\Xi\in L^2(\nu)$. These dynamics now have a general mean reversion level $b$ and allow the logspread to have jumps coming from the given Poisson point process. By appropriate choices of $\gamma$ and $\gamma_\Xi$ one can allow the liquid asset price process and the logspread to have both common and individual jumps. 
\\
The assumption on the dynamics of the logspread gives the process $R$ an important property: Since the first component is a L\'evy process, and the second is an Ornstein-Uhlenbeck process, the risk source driving the forward equation of the considered FBSDE turns out to be an affine process.
This family of processes is particularly tractable. In fact, there exist some results on explicit solutions to FBSDEs, where the
forward process is affine, see \cite{richter2012explicit}. \\
In this case, the wealth process has dynamics
\begin{align*}
\ud X^{x,\pi}_t= &\pi_t\underbrace{\left(\beta + \int_{\R^*}(e^{\gamma(x)}-1-\gamma(x))\nu(\ud x)\right)}_{\varphi}\ud t +\pi_t \int_{\R^*}\underbrace{(e^{\gamma(x)}-1)}_{\psi(x)}\tilde N_p(\ud x, \ud t).
\end{align*}
We will consider a system of a forward and a backward stochastic differential equations (FBSDE) where the forward process is given by the $\R^2-$valued process 
\begin{equation}
R_t=\left(\begin{array}{c}N_t\\\Xi_t\end{array}\right).
\end{equation}
with
\begin{align}
\nonumber\ud R_t=&\overbrace{\left(\begin{array}{c}\beta\\b\end{array}\right)}^{\underline\beta}\ud t + \overbrace{\left(\begin{array}{cc}0&0\\0&-B\end{array}\right)}^{\underline B}\left(\begin{array}{c}N_t\\\Xi_t\end{array}\right)\ud t \\
&\nonumber+ \int_{\R^*}\underbrace{\left(\begin{array}{cc}\gamma(x)\\\gamma_\Xi(x)\end{array}\right)}_{\underline \gamma} \tilde N_p(\ud t, \ud x) + \underbrace{\left(\begin{array}{c}0\\ \Sigma\end{array}\right)}_{\underline\Sigma}\ud W_t
\end{align}

The generator, as in the previous section, is obtained by imposing the supermartingale condition on the utility of $X_t-Y_t$,
where for $Y_t$, due to the martingale representation property of the
filtration, we assume the dynamics
\systeq{\label{JBSDE}-\ud Y_t}{f(t,Y_{t-}, Z_t,U_t)\ud t - Z_t\ud W_t-\int_{\R^*}U_{t}(x) \tilde N_p(\ud t, \ud x)}{Y_T}{h(\exp(N_T-\Xi_T)).}
As shown in Section \ref{sec:Model}, the generator has to be taken according to \eqref{generator}. $Y_T$ is, of course, the ``derivative'' of the illiquid asset which we want to hedge. \\
We will first assume that the infimum in \eqref{generator} is attained at some optimal strategy $\pi^*\in \mathcal A$.
Then 
\begin{equation}\label{jumpgen}
f(t,Z_t,U_t) =\int_{\R^*}g_\alpha\left(U_t(x) - \pi_t^*\psi(x)\right)\nu(\ud x)
- \pi^*_t\varphi + \alpha \frac{|Z_t|^2}{2},
\end{equation}
where $\varphi = \beta + \int_{\R^*}(e^{\gamma(x)}-1-\gamma(x))\nu(\ud x)$, $\psi := \exp(\gamma) - 1$ and $(Y,U,Z)$ is a solution of the BSDE with generator $f$ and terminal condition $B = h(I_T) = F(R_T)$, for $F(n,s):=h(\exp(n-s))$.\\
We will see that the above assumption does not lead to cyclic arguments.\\
The idea is now to use an approach like the one in \cite{richter2012explicit} for processes in the positive semidefinite matrices to find an
explicit solution for the studied FBSDE making an affine ansatz. Since we are considering here a somewhat different generator than \cite{richter2012explicit}, we will  give a direct proof.
\begin{Prop}\label{expl_jumps}
Let $\Gamma: [0,T]\times\R^2\to \R^2$ and $\omega(\cdot, a, v):[0,T]\to\R$ be solutions to the following ODEs 
\begin{align}
&\label{ode1}\left\{\begin{array}{ll}
-\frac{\partial\Gamma}{\partial t}(t,a) = &\underline{B}^* \Gamma(t,a),\quad \forall t\in[0,T)\\
\Gamma(T,a) =&a
\end{array}\right.\\
&\label{ode2}\left\{\begin{array}{ll}
-\frac{\partial \omega}{\partial t}(t,a,v)=&\langle\Gamma(t,a),\underline{\beta}\rangle - \pi^*_t\varphi + \int_{\R^*}g_\alpha\left(\langle\Gamma(t,a),\underline{\gamma}(x)\rangle-\pi^*_t\psi(x)\right)\nu(\ud x) \\
&+\alpha \frac{\langle\underline{\Sigma},\Gamma(t,a)\rangle^2}{2}\quad \forall t\in[0,T)\\
\omega(T,a,v)=&v,
\end{array}\right.
\end{align}
for $a\in\R^2$ and $v\in\R$ some constant initial conditions. Here $\underline B^*$ denotes the adjoint operator of $\underline B$.\\
Then, for every $(t,a,v)\in [0,T]\times\R^2\times\R$,
\begin{equation}\label{expl_sol_1}
\left\{\begin{array}{ll}
Y_t&=\left\langle\Gamma(t,a),\(\begin{array}{c}N_t\\ \Xi_t\end{array}\)\right\rangle+\omega(t,a,v)\\
Z_t &=\langle \Gamma(t,a),\underline{\Sigma}\rangle\\
U_t(x)&=\langle\Gamma(t,a), \underline{\gamma}(x)\rangle
\end{array}\right.
\end{equation}
solves the BSDE \eqref{JBSDE} with terminal
condition $$F(N_T,\Xi_T)$$ and generator $$f(t,z,u)= \int_{\R^*}g_\alpha(u(\cdot) - \pi^*\psi(\cdot))\ud \nu - \pi^*\varphi+\alpha \frac{|z|^2}2,$$ where
\begin{equation*}
F(n,s) = \left\langle a,\left(\begin{array}{c}n\\s\end{array}\right)\right\rangle + v.
\end{equation*}
\end{Prop}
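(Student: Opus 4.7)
The plan is to simply verify that the ansatz \eqref{expl_sol_1} satisfies both the terminal condition and the dynamics of the BSDE \eqref{JBSDE}, since the ODEs \eqref{ode1}--\eqref{ode2} were chosen precisely so that this works. First I would check the terminal condition, which is immediate: at $t=T$ one has $\Gamma(T,a)=a$ and $\omega(T,a,v)=v$, so
\begin{equation*}
Y_T = \langle a, R_T\rangle + v = F(N_T,\Xi_T),
\end{equation*}
as desired.

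Next I would apply It\^o's formula to the process $Y_t=\langle\Gamma(t,a),R_t\rangle+\omega(t,a,v)$. Since $\Gamma(\cdot,a)$ and $\omega(\cdot,a,v)$ are deterministic and $C^1$ in $t$, and the forward process $R_t$ is a semimartingale with drift $\underline\beta+\underline B R_t$, Brownian coefficient $\underline\Sigma$, and compensated jump coefficient $\underline\gamma(x)$, one gets
\begin{align*}
dY_t = \left[\langle \partial_t\Gamma(t,a)+\underline B^*\Gamma(t,a),R_{t}\rangle + \partial_t\omega(t,a,v) + \langle\Gamma(t,a),\underline\beta\rangle\right]dt + \langle\Gamma(t,a),\underline\Sigma\rangle\,dW_t\\
+\int_{\R^*}\langle\Gamma(t,a),\underline\gamma(x)\rangle\,\tilde N_p(dt,dx).
\end{align*}
The Brownian and jump integrands are by construction $Z_t$ and $U_t(x)$, matching the form of \eqref{JBSDE}. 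By \eqref{ode1} the bracket linear in $R_t$ vanishes. By \eqref{ode2},
\begin{equation*}
\partial_t\omega(t,a,v)+\langle\Gamma(t,a),\underline\beta\rangle = \pi_t^*\varphi - \int_{\R^*}g_\alpha(U_t(x)-\pi_t^*\psi(x))\nu(dx) - \tfrac{\alpha}{2}Z_t^2 = -f(t,Z_t,U_t),
\end{equation*}
using the form of the generator \eqref{jumpgen}. Comparing with the BSDE dynamics \eqref{JBSDE} then closes the argument.

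The only subtle point worth flagging is the non-circularity warned about before the statement: the generator depends on $\pi_t^*$, and $\pi_t^*$ in turn is the argmin of a functional of $U_t$. In our ansatz, however, $U_t(x)=\langle\Gamma(t,a),\underline\gamma(x)\rangle$ is a purely deterministic function of $t$ and $x$ that is fixed once $\Gamma$ (i.e., \eqref{ode1}) is solved; only then is $\pi_t^*$ defined pointwise in $t$ via the minimisation, after which the remaining ODE \eqref{ode2} for $\omega$ is a well-posed scalar Cauchy problem whose right-hand side is an explicit deterministic function of $t$. Hence the construction is sequential rather than cyclic, and the verification above is rigorous provided these two ODEs admit solutions on $[0,T]$ — for \eqref{ode1} this is just the matrix exponential of $-\underline B^*$, and for \eqref{ode2} integrability of the integrand in time (guaranteed by the boundedness of $\Gamma$, $\underline\gamma\in L^2(\nu)$ and the bounds on $\pi^*$ in Proposition \ref{minbound}) suffices.
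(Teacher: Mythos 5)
Your proposal is correct and follows essentially the same route as the paper: a direct verification that the affine ansatz satisfies the BSDE, using It\^o's formula on $\langle\Gamma(t,a),R_t\rangle+\omega(t,a,v)$, with \eqref{ode1} cancelling the terms linear in $R$ and \eqref{ode2} producing the generator (the paper writes this in backward integral form from $T$, you in forward differential form, which is only a cosmetic difference). Your closing remark on the sequential, non-circular determination of $\Gamma$, then $\pi^*$, then $\omega$ is a sound articulation of the point the paper makes informally before the statement.
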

Observe that \eqref{ode1} is a linear ODE and \eqref{ode2} just means an integration.
\begin{proof}
Recall that $R_t = (N_t,\Xi_t)$ satisfies 
\begin{equation}
\ud R_t = \underline{\beta} \ud t + \underline{B}R_t \ud t + \underline{\Sigma} \ud W_t + \int_{\R^*} \underline{\gamma}(x) \tilde N_p(\ud t, \ud x)
\end{equation}
and that the generator is given as in \eqref{jumpgen}.\\
The ansatz $Y_t = \langle\Gamma(t,a),(N_t, \Xi_t)^T\rangle+\omega(t,a,v)$ together with an application of It\^o's (backwards) Formula, yields
\begin{align*}
Y_t = &\langle\Gamma(t,a),R_t\rangle + \omega(t,a,v)\stackrel{\text{It\^o}}{=}\\
= &\langle\Gamma(T,a), R_T\rangle + \omega(T,a,v) \\
&-\int_t^T\(\langle\Gamma(s,a),\underline{\beta}\rangle
 + \langle\Gamma(s,a), \underline{B}R_{s-}\rangle \)\ud s- \int_t^T\int_{\R^*} \overbrace{\langle\Gamma(s,a),\underline{\gamma}(x)\rangle}^{U_s(x)}\tilde N_p(\ud s, \ud x)  \\&- \int_t^T\overbrace{\langle\Gamma(s,a),\underline{\Sigma}\rangle}^{Z_s}\ud W_s-
\int_t^T\left(\langle\frac{\partial \Gamma(s,a)}{\partial s},R_{s-}\rangle + \frac{\partial \omega(s,a,v)}{\partial s}\right)\ud s =\\
=& \overbrace{\langle a,R_T\rangle + v}^{F(R_T)} \\
&-\int_t^T\Big(\langle\Gamma(s,a),\underline{\beta}\rangle + \langle\underline{B}^*\Gamma(s,a),R_{s-}\rangle - \langle \underline{B}^*\Gamma(s,a),R_{s-}\rangle -\langle\Gamma(s,a),\underline{\beta}\rangle\\
&+\pi^*_s\varphi
-\int_{\R^*}g_\alpha\left(U_s(x)-\pi^*_s\psi(x)\right)\nu(\ud x)\Big) \ud s - \int_t^T\alpha\frac{|Z_s|^2}{2}\ud s\\
& - \int_t^T\int_{\R^*} U_s(x)\tilde N_p(\ud s,\ud x) - \int_t^T Z_s\ud W_s\\
=&F(R_T) +\int_t^T f(s,Z_s,U_s)\ud s - \int_t^T\int_{\R^*} U_s(x)\tilde N_p(\ud s,\ud x) - \int_t^T Z_s\ud W_s.
\end{align*}
This shows that \eqref{expl_sol_1} gives indeed a solution to the studied FBSDE.
\end{proof}

Thanks to this result, under the assumptions of Theorem \ref{nflvr} we can now compute the optimal strategy by pointwise minimization of the generator in \eqref{jumpgen}. Since the pair $(Z,U)$ above does not depend on $\pi^*$, we can plug it into Equation \eqref{jumpgen}, obtaining
\begin{equation*}
f(t,Z_t,U_t) =\inf_{\pi\in C}\left\{\int_{\R^*}g_\alpha\left(\langle\Gamma(t,a), \underline{\gamma}(x)\rangle - \pi\psi(x)\right)\nu(\ud x) - \pi\varphi \right\} + \alpha \frac{\left|\langle \Gamma(t,a),\underline{\Sigma}\rangle\right|^2}{2}.
\end{equation*}
Computing the infimum we obtain an optimal solution to the utility maximization problem.

\begin{Teo}
 Let $\Gamma$ be as in Proposition \ref{expl_jumps} and assume that $\nu(\psi>0)>0$ and $\nu(\psi<0)>0$. Then any predictable $\pi^*$ satisfying
\begin{equation*}
\pi^*_t\in\argmin_{\pi\in C}\left\{\int_{\R^*}g_\alpha\left(\langle\Gamma(t,a), \underline{\gamma}(x)\rangle - \pi\psi(x)\right)\nu(\ud x) - \pi\varphi \right\}
\end{equation*}
is an optimal cross hedge for the terminal payoff   $\left\langle a,\left(\begin{array}{c}N_T\\\Xi_T\end{array}\right)\right\rangle + v$ for the exponential utility with parameter $\alpha$.
\end{Teo}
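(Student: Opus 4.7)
The plan is to apply the general optimality result of Theorem \ref{thoptimality} to the explicit BSDE solution provided by Proposition \ref{expl_jumps}, so that the optimality of $\pi^*$ reduces to verifying that the hypotheses of that theorem are in force in the affine cross-hedging setting. First I would fix $a\in\R^2$, $v\in\R$ so that the terminal payoff is $F(R_T)=\langle a,R_T\rangle+v$, and recall from Proposition \ref{expl_jumps} the explicit triple
\begin{equation*}
Y_t=\langle\Gamma(t,a),R_t\rangle+\omega(t,a,v),\qquad Z_t=\langle\Gamma(t,a),\underline\Sigma\rangle,\qquad U_t(x)=\langle\Gamma(t,a),\underline\gamma(x)\rangle,
\end{equation*}
where $\Gamma(\cdot,a)$ solves a linear homogeneous ODE on the compact interval $[0,T]$ and is therefore bounded in $t$; consequently $Z$ is a bounded deterministic function and $U_t(x)$ is bounded uniformly in $(t,x)$ since $\underline\gamma=(\gamma,\gamma_\Xi)$ is bounded and in $L^2(\nu)$. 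In particular $|U_t|_{L^2(\nu)}\leq \|\Gamma(\cdot,a)\|_\infty \, |\underline\gamma|_{L^2(\nu)}$ is also bounded in $t$.

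Next I would check the structural hypotheses. In this section $\psi$ and $\varphi$ are deterministic and time-independent, so the assumption $\nu(\psi>0)>0$ and $\nu(\psi<0)>0$ immediately yields constants $c,C,\delta>0$ with $\nu(\psi>C)>\delta$ and $\nu(\psi<-c)>\delta$ uniformly on $\Omega\times[0,T]$. Since $\psi=e^\gamma-1$ with $\gamma$ bounded and $\gamma\in L^2(\nu)$, also $|\psi|_{L^2(\nu)}$ is a finite constant, and the standing no-arbitrage condition of Theorem \ref{nflvr} on $(\varphi,\psi)$ is assumed. By the measurable selection remark preceding Proposition \ref{minbound} a predictable minimiser $\pi^*$ exists, and Proposition \ref{intprop} then supplies parts (i)--(iii) of its conclusion, so $\pi^*$ is admissible and $U(X^{\pi^*,0,x}-Y)$ is a local martingale.

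It then remains to upgrade local to true martingale and to invoke the martingale optimality principle via Theorem \ref{thoptimality}. Boundedness of $Z$ and of $|U_s|_{L^2(\nu)}$ (both observed above), together with the already established boundedness of $\pi^*$ coming from the bound \eqref{eqpibound} and the boundedness of $|U|_{L^2(\nu)}$, place us in the setting of part (iii) of Theorem \ref{thoptimality}; moreover $U(X_0^{\pi^*}-Y_0)=-\exp(-\alpha(x-\langle\Gamma(0,a),R_0\rangle-\omega(0,a,v)))$ is deterministic, hence trivially integrable. Therefore the stochastic logarithm $H$ appearing in the It\^o expansion of $U(X^{\pi^*}-Y)$ is a BMO martingale, Kazamaki's criterion makes $\mathcal{E}(H)$ a uniformly integrable martingale, and the martingale optimality principle identifies $\pi^*$ as an optimiser of the expected terminal utility.

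The main obstacle I anticipate is the bookkeeping of the boundedness assumptions rather than any genuinely new inequality: one must keep track that, in the concrete affine setting, $\Gamma$ is uniformly bounded on $[0,T]$ (which follows because \eqref{ode1} is a linear ODE with constant coefficients) and that the time-independence of $\psi$ and $\varphi$ turns the pointwise conditions of Proposition \ref{intprop} and Theorem \ref{thoptimality} into uniform ones. Once these boundedness facts are lined up, the optimality is essentially an application of the already established machinery rather than a new argument.
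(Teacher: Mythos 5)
Your proposal is correct and follows exactly the paper's route: the paper's own proof is a one-line appeal to Theorem \ref{thoptimality}, stating that all its assumptions are ``straightforward to check using the explicit form of $U$, $Y$ and $Z$,'' and your write-up simply carries out that checking in detail (boundedness of $\Gamma$ on $[0,T]$, hence of $Z$ and $|U|_{L^2(\nu)}$, the uniform constants $c,C,\delta$ from time-independence of $\psi$, and integrability of $U(X_0^{\pi^*}-Y_0)$). No gaps; the extra bookkeeping you supply is exactly what the paper leaves implicit.
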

Note that the optimal strategy is bounded and can be taken deterministic.

\begin{proof}
 This follows from Theorem \ref{thoptimality} for which all assumptions are straightforward to check using the explicit form of $U, Y$ and $Z$ given above.
\end{proof}

\begin{oss}
(i) Although in this example we are able to achieve explicit solutions, the scope of application is still limited, since having an affine terminal condition in the BSDE means hedging a  logarithmic claim on the illiquid asset. One could think of extending the ``ansatz''-approach to more general terminal conditions, but with the generator associated to our cross-hedging problem we have not succeeded so far. However, for some simpler generators one can get explicit results with more suitable terminal conditions, as  illustrated in the next section.

(ii) Despite what we just said about the relevance of logarithmic (in the price of the illiquid asset) terminal conditions, there is a substantial literature on such ``log-contracts'', as they are equivalent to variance swaps in a Brownian market model and also are related to variance swaps in models with jumps (see \cite{CarrLeeWu2012,CarrLee2013}, for instance). Therefore, an interesting question for future research may be whether our results can be used to hedge positions in the variance/realized volatility of the illiquid asset.

(iii) A close inspection of the previous arguments shows that the results can be extended to $\beta, b, B,\Sigma, \gamma,\gamma_\Xi$ being not constant, but appropriate predictable processes. However, we refrain from carrying out this in detail, as it gives a long list  of technical (boundedness) conditions.
\end{oss}

\section{An exponential ansatz}\label{sec:exp terminal}
One could think of generalizing the approach of Section \ref{sec:Cross hedging} by using a different ansatz for the solution of the BSDE, but this seems to require a restriction on the generator and the terminal condition incompatible with our utility optimization problem. In the following, we will give another example where explicit solutions can be achieved for a quite general generator, which, however, does not allow an infimum  in it as needed for the utility optimization problem. Despite of this, it seems to us an interesting example of a L\'evy-driven BSDE which can be solved explicitly.\\
\begin{Prop}
Consider an FBSDE of the following form:
\begin{equation*}
\left\{\begin{array}{rl}
\ud R_t = &\underline\beta \ud t + \underline BR_t \ud t + \underline\Sigma \ud W_t + \int_{\R^*} \underline\gamma(\xi) \tilde N_p(\ud t, \ud \xi),\\
R_0 =& r,
\end{array}\right.
\end{equation*}
\begin{equation*}
\left\{\begin{array}{rl}
-\ud Y_t = &f(t,Y_t,Z_t,U_t)\ud t - Z_t\ud W_t - \int_{\R^*}U_t(x)\tilde N_p(\ud t,\ud x)\\
Y_T =& F(R_T).
\end{array}\right.
\end{equation*}
Assume now that:
\begin{itemize}
\item the terminal condition is exponential of the form $F(r) = \exp(\langle a,r\rangle)w + v$ for $a\in\R^2,\ v,w\in\R$ constants;
\item the generator $f$ is of the form
\begin{equation}\label{lingen} 
f(s,y,z,u) = c_y(s)y + c_z(s)z + \int_{\R^*}c_u(s)u(x)\nu(\ud x) + c(s)
\end{equation}
with $c_y,c,c_z,c_u:[0,T]\to\R$, continuous functions of time.
\end{itemize}
Let $\Gamma(\cdot, a):[0,T]\to\R^2,\ \omega(\cdot, a, w):[0,T]\to\R$ and $\xi(\cdot, a,w,v):[0,T]\to\R$ be the unique solutions to the following differential equations:
\begin{align*}
&\left\{
\begin{array}{rl}
-\frac{\partial \Gamma}{\partial s}(s,a)=&\underline B^*\Gamma(s,a)\\
\Gamma(T,a)=&a
\end{array}
\right.\\
&\left\{
\begin{array}{rl}
-\frac{\partial \omega}{\partial s}(s,a,w)=&\omega(t,a,w)\[\frac 12\tr\left(\underline\Sigma\underline\Sigma^T\Gamma(s,a)\Gamma(s,a)^T\right) + \langle\Gamma(s,a),\underline\beta\rangle \right.\\
&+ \int_{\R^*}\left(e^{\langle\Gamma(s,a),\underline\gamma(x)\rangle}-1-\langle\Gamma(s,a),\underline\gamma(x)\rangle\right)\nu(\ud x) \\ &\left.+c_y(s) +c_z(s) \langle\Gamma(s,a),\underline\Sigma\rangle +c_u(s)\int_{\R^*}\(e^{\langle\Gamma(s,a),\underline\gamma(\cdot)\rangle}-1\)\nu(\ud x)\]\\
\omega(T,a,w)=&w
\end{array}
\right.\\
&\left\{
\begin{array}{rl}
-\frac{\partial \xi}{\partial s}(s,a,w,v)=&c_y(s)\xi(s,a,w,v) +c(s)\\
\xi(T,a,w,v)=&v. 
\end{array}
\right.
\end{align*}
where $\underline B^*$ is the adjoint operator of $\underline B$.\\
Then the triplet of adapted/predictable processes
\begin{equation}\label{exp_solution}
\left\{\begin{array}{rl}
Y_t&=\exp\left(\langle\Gamma(t,a),R_t\rangle\right)\omega(t,a,w) + \xi(t,a,w,v),\\
Z_t &=\exp\left(\langle\Gamma(t,a),R_{t-}\rangle \right)\omega(t,a,w)\langle\Gamma(t,a),\underline\Sigma\rangle \\
U_t(x)&=\exp\left(\langle\Gamma(t,a),R_{t-}\rangle\right)\omega(t,a,w) \left(e^{\langle\Gamma(t,a),\underline\gamma(x)\rangle}-1\right),
\end{array}\right.
\end{equation}
solves the FBSDE.
\end{Prop}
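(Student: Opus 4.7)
The plan is to verify the ansatz \eqref{exp_solution} by direct computation, mirroring the strategy used in the proof of Proposition \ref{expl_jumps} but now with an exponential rather than affine dependence on $R_t$. First I would define the auxiliary process $\phi_t := \exp(\langle\Gamma(t,a),R_t\rangle)$ and apply It\^o's formula to the function $(t,r)\mapsto e^{\langle\Gamma(t,a),r\rangle}$ composed with $R_t$. Using $\partial_t F = \langle\partial_t\Gamma,r\rangle F$, $\nabla F=\Gamma F$, $D^2 F=\Gamma\Gamma^\top F$, and the dynamics of $R$, this gives
\begin{align*}
d\phi_t = \phi_{t-}\Big[&\bigl(\langle\partial_t\Gamma,R_{t-}\rangle+\langle\Gamma,\underline\beta\rangle+\langle\Gamma,\underline BR_{t-}\rangle+\tfrac12\tr(\underline\Sigma\underline\Sigma^\top\Gamma\Gamma^\top)\\
&+\textstyle\int_{\R^*}(e^{\langle\Gamma,\underline\gamma(x)\rangle}-1-\langle\Gamma,\underline\gamma(x)\rangle)\nu(\ud x)\bigr)\ud t\\
&+\langle\Gamma,\underline\Sigma\rangle\ud W_t+\textstyle\int_{\R^*}(e^{\langle\Gamma,\underline\gamma(x)\rangle}-1)\tilde N_p(\ud t,\ud x)\Big].
\end{align*}
The ODE $-\partial_t\Gamma=\underline B^*\Gamma$ then eliminates the $R_{t-}$-dependent terms exactly as in Proposition \ref{expl_jumps}, so the remaining drift of $\phi_t$ is a deterministic function of $t$ times $\phi_{t-}$.

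Next I would set $Y_t=\phi_t\omega(t,a,w)+\xi(t,a,w,v)$ and use It\^o's product rule (trivial in the cross-variation since $\omega$ and $\xi$ are deterministic) to get
\[d Y_t=\omega(t,a,w)\,d\phi_t+\phi_{t-}\partial_t\omega(t,a,w)\,\ud t+\partial_t\xi(t,a,w,v)\,\ud t.\]
Reading off the martingale parts immediately yields the stated formulas for $Z_t$ and $U_t(x)$. The drift part of $-dY_t$ must then be matched to $f(t,Y_t,Z_t,U_t)\ud t$ with the linear generator \eqref{lingen}. I would substitute the ansatz into $f$, grouping separately the terms proportional to $\phi_{t-}\omega(t,a,w)$ and the terms independent of $\phi$. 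The $\phi$-proportional equation is precisely the ODE defining $\omega$, while the $\phi$-independent equation, involving only $c_y(s)\xi+c(s)$, is precisely the ODE defining $\xi$. Finally, the terminal conditions $\Gamma(T,a)=a$, $\omega(T,a,w)=w$, $\xi(T,a,w,v)=v$ give $Y_T=e^{\langle a,R_T\rangle}w+v=F(R_T)$.

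The routine but slightly delicate step is the bookkeeping in the match: one needs to recognise that $c_z(s)Z_s$ and $c_u(s)\int U_s(x)\nu(\ud x)$ combine with the classical It\^o correction $\tfrac12\tr(\underline\Sigma\underline\Sigma^\top\Gamma\Gamma^\top)$ and the jump compensator $\int(e^{\langle\Gamma,\underline\gamma\rangle}-1-\langle\Gamma,\underline\gamma\rangle)\nu(\ud x)$ to form exactly the bracket appearing in the $\omega$-ODE. No existence/uniqueness issue arises because the three ODEs are linear (with continuous coefficients, assuming the integrals involving $\underline\gamma$ are finite, which follows from $\underline\gamma\in L^2(\nu)$ together with the boundedness assumptions already made). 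Thus no fixed-point or contraction argument is needed, and the main obstacle is purely organisational: keeping track of which drift terms are absorbed by $\Gamma$, which by $\omega$, and which by $\xi$.
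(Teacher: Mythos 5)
Your proposal is correct and follows essentially the same route as the paper: the paper applies It\^o's formula directly to $h(t,x)=\exp(\langle\Gamma(t,a),x\rangle)\omega(t,a,w)+\xi(t,a,w,v)$ and matches drift and martingale parts against the three ODEs, which is exactly your computation with the factorisation through $\phi_t$ made explicit. The term-by-term bookkeeping you describe (the $\underline B^*$-cancellation, the grouping into $\phi$-proportional and $\phi$-independent drift terms) is precisely what the paper's displayed calculation carries out.
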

We do not want to discuss $Y,Z,U$ in any detail, but note that, since $R$ has exponential moments of all orders and the ODEs are at most linear, it is obvious that they satisfy square integrability conditions ensuring all relevant stochastic integrals to be well-defined.
\begin{proof}
Apply It\^o's Formula to the regular function $h(t,x):=\exp(\langle\Gamma(t,a),x\rangle )\omega(t,a,w) + \xi(t,a,w,v)$. The ansatz $Y_t=h(t,R_t)=\exp\left(\langle\Gamma(t,a),R_t\rangle\right)\omega(t,a,w) + \xi(t,a,w,v)$ together with the assumptions on the time evolution of the coefficients $\Gamma, \omega$ and $\xi$ yields
\begin{align*}
Y_t = &\exp\left(\langle\Gamma(T,a),R_T\rangle \right)\omega(T,a,w) + \xi(T,a,w,v) \\ 
&- \int_t^T\left\{\exp\left(\langle\Gamma(s,a),R_{s-}\rangle \right)\left[\omega(s,a,w)\left(\left\langle\frac{\partial \Gamma(s,a)}{\partial s},R_{s-}\right\rangle +  \frac 12\tr\left( \underline\Sigma\underline\Sigma^T\Gamma(s,a) \Gamma(s,a)^T\right)\right) \right.\right.\\
&+\left.\left.  \frac{\partial \omega(s,a,w)}{\partial s}\right] + \frac{\partial \xi(s,a,w,v)}{\partial s}\right\}\ud s\\
&-\int_t^T\exp\left(\langle\Gamma(s,a),R_{s-}\rangle\right)\omega(s,a,w)\langle\Gamma(s,a),\underline\beta\ud s + \underline BR_{s-}\ud s + \underline \Sigma \ud W_s\rangle\\
&-\int_t^Te^{\langle\Gamma(s,a),R_{s-}\rangle}\omega(s,a,w)\int_{\R^*}\left(\exp\left(\langle\Gamma(s,a),\underline\gamma(x)\rangle\right)-1\right)\tilde N_p(\ud s, \ud x)\\
&-\int_t^Te^{\langle\Gamma(s,a),R_{s-}\rangle}\omega(s,a,w) \int_{\R^*}\left(\exp\left(\langle\Gamma(s,a),\underline\gamma(x)\rangle\right)-1-\langle\Gamma(s,a),\underline\gamma(x)\rangle\right)\nu(\ud x) \ud s
\\
=&F(N_T,\Xi_T) \\ 
&-\int_t^T e^{\langle\Gamma(s,a),R_{s-}\rangle}\left\{\omega(s,a,w) \left[ \left\langle\frac{\partial \Gamma(s,a)}{\partial s},R_{s-}\right\rangle + \frac 12\tr\left(\underline\Sigma\underline\Sigma^T\Gamma(s,a)\Gamma(s,a)^T\right) \right.\right.\\
&\left. + \langle\Gamma(s,a),\underline\beta\rangle-\langle\Gamma(s,a),\underline BR_{s-}\rangle+\int_{\R^*}\left(e^{\Gamma(s,a)\underline\gamma(x)}-1-\langle\Gamma(s,a),\underline\gamma(x)\rangle\right)\nu(\ud x)\right]\\
&\left.+ \frac{\partial \omega(s,a,w)}{\partial s} \right\}\ud s - \int_t^T \frac{\partial}{\partial s}\xi(s,a,w,v) \ud s\\
&-\int_t^T\underbrace{\exp\left(\langle\Gamma(s,a),R_{s-}\rangle \right) \omega(s,a,w) \langle\Gamma(s,a),\underline\Sigma\rangle}_{:=Z_s}\ud  W_s\\
&-\int_t^T\int_{\R^*} \underbrace{\exp\left(\langle\Gamma(s,a),R_{s-}\rangle\right)\omega(s,a,w)\left(e^{\langle\Gamma(s,a),\underline\gamma(x)\rangle}-1\right)}_{:=U_s(x)}\tilde N_p(\ud s, \ud x)=\\
=&F(N_T,\Xi_T) \\ 
&-\int_t^T \Big\{e^{\langle\Gamma(s,a),R_{s-}\rangle}\omega(s,a,w) \left[ -c_y(s) -c_z(s)\langle \Gamma(s,a),\underline \Sigma\rangle -c_u(s)\int_{\R^*}\(e^{\langle\Gamma(s,a),\underline\gamma\rangle}-1\)\nu(\ud x)\right]\\
&-c_y(s)\xi(s,a,w,v) -c(s) \Big\}\ud s\\
&- \int_t^T\int_{\R^*} U_s(x)\tilde N_p(\ud s,\ud x) - \int_t^T Z_s\ud W_s=\\
=&F(N_T,\Xi_T) \\ 
&-\int_t^T \Big\{-c_y(s) \underbrace{\(e^{\langle\Gamma(s,a),R_{s-}\rangle}\omega(s,a,w) +\xi(s,a,w,v)\)}_{Y_{s-}} -c_z(s)\underbrace{e^{\langle\Gamma(s,a),R_{s-}\rangle}\omega(s,a,w)\langle \Gamma(s,a),\underline \Sigma\rangle}_{Z_s} \\
&-c_u(s)\int_{\R^*}\underbrace{e^{\langle\Gamma(s,a),R_{s-}\rangle}\omega(s,a,w)\(e^{\langle\Gamma(s,a),\underline\gamma(\cdot)\rangle}-1\)}_{U_s(x)}\nu(\ud x)-c(s) \Big\}\ud s\\
&- \int_t^T\int_{\R^*} U_s(x)\tilde N_p(\ud s,\ud x) - \int_t^T Z_s\ud W_s=\\
=&F(N_T,\Xi_T) +\int_t^T f(s,Y_{s-},Z_s,U_s)\ud s - \int_t^T\int_{\R^*} U_s(x)\tilde N_p(\ud s,\ud x) - \int_t^T Z_s\ud W_s.
\end{align*} 
This proves that the triplet given in \eqref{exp_solution} solves the studied FBSDE with terminal condition  $F(r) = \exp(\langle a,r\rangle)w + v$ and generator as in \eqref{lingen}.
\end{proof}
\begin{oss}
Observing the structure of the controls, one can see that this approach results in determining some ODE just in the case when the generator is linear affine in $Y$, $Z$ and $U$. This is due to the fact that no random factors can appear in the time evolution of the functions parametrizing the ansatz.
\end{oss}
\section*{Acknowlegments}
C.M. gratefully acknowledges support by the DFG Graduiertenkolleg 1100.

\end{document}